\DeclareMathOperator{\Tr}{Tr}
\newcommand{\C}{{\mathbb C}}
\newcommand{\F}{{\mathbb F}}
\newcommand{\Q}{{\mathbb Q}}
\newcommand{\Z}{{\mathbb Z}}
\newcommand{\card}[1]{\left|{#1}\right|}
\newcommand{\sums}[1]{\sum_{\substack{#1}}}
\newcommand{\mins}[1]{\min_{\substack{#1}}}
\newcommand{\floor}[1]{\left\lfloor{#1}\right\rfloor}
\newcommand{\grmul}[1]{{#1}^{\times}}
\newcommand{\Fu}{\grmul{F}}
\newcommand{\cFu}{|\grmul{F}|}
\newcommand{\Fp}{\F_p}
\newcommand{\Wfd}{W_{F,d}}
\newcommand{\Wfda}{\Wfd(a)}
\newcommand{\valp}{v_p}
\newcommand{\Zn}{\Z/n\Z}
\newcommand{\Zbn}{\Z/(b^n-1)\Z}
\newcommand{\Zpn}{\Z/(p^n-1)\Z}
\newcommand{\Ztn}{\Z/(3^n-1)\Z}
\newcommand{\wt}{w}
\newcommand{\wtbn}{w_{b,n}}
\newcommand{\wtpn}{w_{p,n}}
\newcommand{\wttn}{w_{3,n}}
\newcommand{\tlf}[3]{\langle {#1}, {#2}, {#3} \rangle}
\newcommand{\bs}{\bar s}
\newcommand{\bw}{\bar w}
\newcommand{\bbw}{\bar{\bar w}}
\newcommand{\bx}{\bar x}
\newcommand{\bbx}{\bar{\bar x}}
\newcommand{\by}{\bar y}
\newcommand{\bby}{\bar{\bar y}}
\newcommand{\bz}{\bar z}
\def\refsec#1{Section \ref{#1}}
\def\refprop#1{Proposition \ref{#1}}
\def\reflem#1{Lemma \ref{#1}}
\def\refcor#1{Corollary \ref{#1}}
\def\refconj#1{Conjecture \ref{#1}}
\def\refstep#1{Step \ref{#1}}
\def\reffig#1{Figure \ref{#1}}
\def\reftab#1{Table \ref{#1}}
\def\reffoot#1{footnote \ref{#1}}
\newtheorem{theorem}{Theorem}[section]
\newtheorem{proposition}[theorem]{Proposition}
\newtheorem{lemma}[theorem]{Lemma}
\newtheorem{corollary}[theorem]{Corollary}
\newtheorem{conjecture}[theorem]{Conjecture}
\theoremstyle{remark}
\newtheorem{step}{Step}
\title[Three-Valued Weil Sums]{Proof of a Conjectured Three-Valued Family of Weil Sums of Binomials}
\author{Daniel J. Katz}
\address{Department of Mathematics, California State University, Northridge, \: United States}
\author{Philippe Langevin}
\address{Institut de Math\'ematiques de Toulon, Universit\'e de Toulon, France}
\date{first version: 08 September 2014; this version: 17 March 2015}
\begin{document}

\begin{abstract}
We consider Weil sums of binomials of the form $W_{F,d}(a)=\sum_{x \in F} \psi(x^d-a x)$, where $F$ is a finite field, $\psi\colon F\to {\mathbb C}$ is the canonical additive character, $\gcd(d,|F^\times|)=1$, and $a \in F^\times$.
If we fix $F$ and $d$ and examine the values of $W_{F,d}(a)$ as $a$ runs through $F^\times$, we always obtain at least three distinct values unless $d$ is degenerate (a power of the characteristic of $F$ modulo $|F^\times|$).
Choices of $F$ and $d$ for which we obtain only three values are quite rare and desirable in a wide variety of applications.
We show that if $F$ is a field of order $3^n$ with $n$ odd, and $d=3^r+2$ with $4 r \equiv 1 \pmod{n}$, then $W_{F,d}(a)$ assumes only the three values $0$ and $\pm 3^{(n+1)/2}$.
This proves the 2001 conjecture of Dobbertin, Helleseth, Kumar, and Martinsen.
The proof employs diverse methods involving trilinear forms, counting points on curves via multiplicative character sums, divisibility properties of Gauss sums, and graph theory.
\end{abstract}

\maketitle

\section{Introduction}

We consider Weil sums of binomials of the form
\[
\Wfda=\sum_{x \in F} \psi(x^d-a x),
\]
where $F$ is a finite field of characteristic $p$ and order $q$, $\psi\colon F\to \C$ is the canonical additive character, $d$ is a positive integer with $\gcd(d,q-1)=1$, and $a \in F$.
These sums and their relatives are much-studied objects in number theory \cite{Kloosterman,Vinogradow,Davenport-Heilbronn,Karatsuba,Carlitz-1978,Carlitz-1979,Lachaud-Wolfmann,Katz-Livne,Coulter,Cochrane-Pinner-2003,Cochrane-Pinner-2011}, and arise in applications to digital sequence design, cryptography, coding theory, and finite geometry, as detailed in \cite[Appendix]{Katz}.

For classical applications in communications like radar or signal synchronization, one fixes $F$ and $d$ and considers the values of $\Wfda$ as $a$ runs through $\Fu$; we ignore $\Wfd(0)$, which is the Weil sum of the monomial $x^d$, and is trivially $0$.
In such situations, it is desirable that all the values of $\Wfd$ be as small in magnitude as possible.
It is easy to calculate (see \cite[Proposition 3.1]{Katz}) that 
\[
\sum_{a \in \Fu} \Wfda^2=q^2,
\] 
which means that some $|\Wfda| > \sqrt{q}$, and it is possible to find $F$ and $d$ such that no $|\Wfda|$ is much larger than $\sqrt{q}$.
Some of the best choices of $F$ and $d$ for this purpose have the property that the number of distinct values, that is, $\card{\{\Wfda: a \in \Fu\}}$ is small.
However, we do exclude the case of {\it degenerate} $d$, that is, where $d$ is congruent to a power of $p$ modulo $q-1$, for then $\psi(x^d)=\psi(x)$, and our sum degenerates to the Weil sum of the monomial $(1-a)x$, with $\Wfd(1)=q$  (the largest possible magnitude for a Weil sum) and $\Wfd(a)=0$ for $a\not=1$.

We say that $\Wfd$ is {\it $v$-valued} to mean that $\card{\{\Wfda: a \in \Fu\}}=v$.
The fundamental result about how many values $\Wfd$ takes is due to Helleseth \cite[Theorem 4.1]{Helleseth}.
\begin{theorem}[Helleseth, 1976]
$\Wfd$ is at at least three-valued if $d$ is nondegenerate.
\end{theorem}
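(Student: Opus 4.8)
The plan is a power-moment argument showing that a nondegenerate $d$ can produce neither one nor two values. Write $q=\card{F}=p^n$ and abbreviate $\Wfda$ to $W(a)$; note $W(0)=\sum_{x\in F}\psi(x^d)=0$ because $\gcd(d,q-1)=1$ makes $x\mapsto x^d$ a permutation of $F$, and a one-line character-sum computation gives the first moment $\sum_{a\in\Fu}W(a)=q$, while I take the second moment $\sum_{a\in\Fu}W(a)^2=q^2$ as given. Each $W(a)$ is a real algebraic integer (the values are $\pm1$-sums when $p=2$, and when $p$ is odd the constraint $\gcd(d,q-1)=1$ forces $d$ odd, whence $\overline{W(a)}=W(a)$ after $x\mapsto -x$). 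Suppose the value set $V=\{W(a):a\in\Fu\}$ satisfied $\card{V}\le2$. A single value $\alpha$ would give $(q-1)\alpha=q$ and $(q-1)\alpha^2=q^2$, forcing $q=2$, which is degenerate; so assume $V=\{\alpha,\beta\}$ with $\alpha\neq\beta$. Summing $(W(a)-\alpha)(W(a)-\beta)=0$ over the $q-1$ elements $a\in\Fu$ and inserting the two moments yields the single relation
\[
q^2-sq+p_2(q-1)=0,\qquad\text{where } s=\alpha+\beta,\ p_2=\alpha\beta .
\]

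Next I pin down $s$ using two arithmetic inputs. First, the substitution $x=cu$ with $c^d=t^{-1}$ shows that the automorphism $\sigma_t\colon\zeta\mapsto\zeta^t$ (where $\zeta=e^{2\pi i/p}$ and $t$ ranges over $\Fp^\times$) carries $W(a)$ to the value $W(tac)$, so $\mathrm{Gal}(\Q(\zeta)/\Q)$ permutes $V$; hence $V$ is Galois-stable, its symmetric functions $s,p_2$ are rational, and being algebraic integers they lie in $\Z$. Integrality of $p_2=q(s-q)/(q-1)$ then forces $s\equiv q\equiv1\pmod{q-1}$. Second, reducing $W(a)=\sum_{x\in F}\psi(x^d-ax)$ modulo the prime ideal $(1-\zeta)$ lying over $p$, where every value of $\psi$ is congruent to $1$, gives $W(a)\equiv q\equiv0$, so $p\mid\alpha$, $p\mid\beta$, and $p\mid s$. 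Combining $p\mid s$, $s\equiv1\pmod{q-1}$, and the trivial bound $\card{W(a)}\le q$ (hence $\card{s}\le2q$) leaves only the candidates $s\in\{3-2q,\,2-q,\,1,\,q,\,2q-1\}$; reduction modulo $p$ (using $q\equiv0$) eliminates every candidate but $s=q$, except that for $p=2$ the value $s=2-q$ and for $p=3$ the value $s=3-2q$ also pass both congruences. These, however, yield a value equal to $-q$ (impossible, since a value $-q$ would, by the second moment, force the distribution $\{-q,0\}$ with first moment $-q\neq q$) or a value of magnitude exceeding $q$. Thus $s=q$, whence $p_2=0$ and $V=\{0,q\}$.

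It remains to convert $V=\{0,q\}$ into degeneracy. Some $a_0$ has $W(a_0)=q$, and equality in the triangle inequality for a sum of $q$ unit-modulus terms forces $\psi(x^d-a_0x)=1$, i.e. $\Tr(x^d-a_0x)=0$, for every $x\in F$. Thus the function $x\mapsto\Tr(x^d)=\sum_{i=0}^{n-1}x^{dp^i}$ coincides with the $\Fp$-linear map $x\mapsto\Tr(a_0x)=\sum_{i=0}^{n-1}a_0^{p^i}x^{p^i}$; comparing these as polynomials of degree $<q$ reduced modulo $x^q-x$ shows that $d$ reduces modulo $q-1$ to a power of $p$, which is precisely the statement that $d$ is degenerate. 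Contrapositively, a nondegenerate $d$ admits neither one nor two values, proving $\Wfd$ is at least three-valued. I expect the main obstacle to lie in this last step: making rigorous that the extreme value $q$ is attainable only in the degenerate case, in particular controlling the possible coalescing of the exponents $dp^i\bmod(q-1)$ when $x\mapsto\Tr(x^d)$ is passed to its reduced polynomial, whereas the moment identities, the Galois-rationality of $s$ and $p_2$, and the $(1-\zeta)$-adic congruence are comparatively routine.
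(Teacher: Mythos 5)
The paper does not prove this statement; it is quoted from Helleseth's 1976 paper and used as a black box, so there is no internal proof to compare against. Your proposal is, in substance, a correct and self-contained proof in the spirit of the original: the two-moment identities, the Galois-stability of the value set (giving $s,p_2\in\Z$), the congruence $s\equiv 1\pmod{q-1}$ from integrality of $p_2=q(s-q)/(q-1)$, the $(1-\zeta)$-adic congruence forcing $p\mid s$, and the endgame reducing $V=\{0,q\}$ to degeneracy all hold together. Three small points deserve attention. First, your assertion ``$p\mid\alpha$, $p\mid\beta$'' is not justified: $\alpha$ and $\beta$ need not be rational integers (by Helleseth's other theorem quoted in \refsec{SECGEN}, $\Wfda\in\Z$ only when $d\equiv 1\pmod{p-1}$), and even for a rational integer, divisibility by the prime $(1-\zeta)$ is weaker than divisibility by $p$ unless the number is rational. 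Fortunately all you use is $p\mid s$, and that \emph{is} correct: $\alpha\equiv\beta\equiv 0\pmod{(1-\zeta)}$ gives $s\equiv 0\pmod{(1-\zeta)}$, and $s\in\Z\cap(1-\zeta)=p\Z$. Second, the candidate list $\{3-2q,\,2-q,\,1,\,q,\,2q-1\}$ uses $\card{1+k(q-1)}\le 2q\Rightarrow\card{k}\le 2$, which requires $q\ge 5$; this is harmless because nondegenerate exponents exist only when $\phi(q-1)>n$, which fails for $q\le 4$, but you should say so.

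Third, the obstacle you flag at the end is not actually an obstacle: the exponents $dp^i\bmod(q-1)$, $i=0,\dots,n-1$, cannot coalesce, because $\gcd(d,q-1)=1$ and $p$ has multiplicative order exactly $n$ modulo $q-1$ (as $p^n\equiv 1$ while $0<p^k-1<q-1$ for $0<k<n$). Hence the reduction of $\Tr(x^d)$ modulo $x^q-x$ is a sum of $n$ distinct monomials each with coefficient $1$, the reduction of $\Tr(a_0x)$ is supported on the distinct exponents $p^j$, and equality of the two reduced polynomials (both of degree at most $q-1$ and agreeing at all $q$ points of $F$) forces $\{dp^i\bmod(q-1)\}=\{p^j\}$; in particular $d\equiv p^j\pmod{q-1}$ for some $j$, i.e.\ $d$ is degenerate. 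With these repairs the argument is complete.
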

Thus the smallest number of distinct values for an interesting Weil sum $\Wfd$ is three.
From 1966 to the present, only nine infinite families of $(F,d)$ pairs that produce three-valued Weil sums $\Wfd$ have been discovered; these are listed in \cite[Table 1]{Aubry-Katz-Langevin}.

This paper adds a tenth three-valued infinite family by proving the following conjecture \cite[Conjecture B]{DHKM}.
\begin{conjecture}[Dobbertin-Helleseth-Kumar-Martinsen, 2001]\label{Alice}
If $F$ is a finite field of order $q=3^n$ with $n$ odd and $n > 1$, and $d=3^r+2$ with $4 r \equiv 1 \pmod{n}$, then $\Wfd$ is three-valued with
\[
\Wfda = \begin{cases}
0 & \text{for $q-q/3-1$ values of $a \in \Fu$,} \\
+\sqrt{3 q} & \text{for $(q+\sqrt{3 q})/6$ values of $a \in \Fu$, and} \\
-\sqrt{3 q} & \text{for $(q-\sqrt{3 q})/6$ values of $a \in \Fu$.}
\end{cases}
\]
\end{conjecture}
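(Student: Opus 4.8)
The plan is to prove that every value $\Wfda$ with $a \in \Fu$ lies in the set $\{0, \pm s\}$, where $s = 3^{(n+1)/2} = \sqrt{3q}$, and then to recover the multiplicities from low-order power moments. Write $T_j = \sum_{a \in \Fu} \Wfda^j$. Since the values of $\psi$ lie in $\Q(\zeta_3)$ and $d = 3^r + 2$ is odd, replacing $x$ by $-x$ shows that each $\Wfda$ is real, hence a rational integer. The reduction rests on three inputs: the divisibility $s \mid \Wfda$, and the moment evaluations $T_2 = q^2$ and $T_4 = 3q^3$. Granting these, write $\Wfda = s\,m_a$ with $m_a \in \Z$; then $T_2 = q^2$ forces $\sum_{a \in \Fu} m_a^2 = q/3$ and $T_4 = 3q^3$ forces $\sum_{a \in \Fu} m_a^4 = q/3$, so $\sum_{a \in \Fu} m_a^2(m_a^2 - 1) = 0$. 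Each summand is a nonnegative integer, so every $m_a \in \{-1,0,1\}$ and thus $\Wfda \in \{0, \pm s\}$. Helleseth's theorem makes all three values occur once we check that $d$ is nondegenerate, and solving the linear system given by $\card{\Fu} = q-1$, $T_1 = q$, and $T_2 = q^2$ returns exactly the multiplicities in \refconj{Alice}.

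The moments $T_1$ and $T_2$ are immediate from additive orthogonality: summing over all of $F$ and using that $\sum_{a \in F} \psi(-ax)$ equals $q$ for $x = 0$ and $0$ otherwise leaves $\sum_{a \in F}\Wfda = q$ and $\sum_{a \in F}\Wfda^2 = q\sum_{x \in F}\psi(x^d + (-x)^d) = q^2$, the latter because $d$ odd gives $x^d + (-x)^d = 0$; since $\Wfd(0) = \sum_{x}\psi(x^d) = 0$, these yield $T_1 = q$ and $T_2 = q^2$. The first substantive step is the divisibility $v_3(\Wfda) \geq (n+1)/2$. I would expand $\psi(x^d)$ over the multiplicative characters of $F$ to express $\Wfda$ as a $\Z$-linear combination of products of two Gauss sums, and then bound the $3$-adic valuation of each Gauss sum by a digit-sum (Stickelberger, equivalently Gross--Koblitz). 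The congruence $4r \equiv 1 \pmod n$ is exactly what forces the minimum of these valuations to be $(n+1)/2$; this is where the arithmetic of the exponent $d = 3^r + 2$ enters.

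The heart of the argument, and the step I expect to be the main obstacle, is $T_4 = 3q^3$. Orthogonality in $a$ and the substitution $w = -(x+y+z)$ reduce it to
\[
T_4 = q \sum_{x,y,z \in F} \psi\bigl(x^d + y^d + z^d - (x+y+z)^d\bigr),
\]
so the task is to show this triple sum equals $3q^2$. Because $x^d = x^2\,x^{3^r}$ is a cubic monomial up to Frobenius, polarizing $\Tr(x^d)$ yields a symmetric trilinear form $B$ on $F$ over $\Fp$, and the summand above is controlled by $B$ together with the quadratic forms $x \mapsto B(x,x,u)$ obtained by freezing an argument. I would determine the rank and radical distribution of these quadratic forms, convert the resulting counts of points on the associated curves into multiplicative character sums, and evaluate them through Gauss and Jacobi sums. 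The bookkeeping that fuses these local counts into the single value $3q^2$ is naturally modeled by a graph whose vertices are field elements and whose edges record the coincidences forced by $B$; a combinatorial analysis of this graph tames what would otherwise be an intractable case split. The same trilinear framework incidentally evaluates $T_3$ and provides consistency checks.

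With $T_1 = q$, $T_2 = q^2$, $T_4 = 3q^3$, and $s \mid \Wfda$ established, the reduction of the first paragraph applies to confine the values to $\{0, \pm s\}$ and to produce the stated distribution, proving \refconj{Alice}. I anticipate that nearly all the difficulty will concentrate in controlling $B$: proving that its degenerate locus has exactly the size needed for the character-sum evaluation of $T_4$ to collapse to $3q^2$, which is also where the hypothesis $4r \equiv 1 \pmod n$ must re-enter the fourth-moment computation.
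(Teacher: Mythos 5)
Your proposal follows essentially the same route as the paper: the same reduction (divisibility by $\sqrt{3q}$ plus the moments $T_2=q^2$ and $T_4=3q^3$ force $\sum_a \Wfda^2(\Wfda^2-3q)=0$ with nonnegative terms, then the linear system gives the multiplicities), the same Stickelberger/digit-sum strategy for the divisibility, and the same trilinear-form-plus-quadratic-character-sum strategy for the fourth moment. The one caveat is that the two substantive inputs are only sketched here, and in particular the digit-sum minimization encoding $4r\equiv 1\pmod n$ is where the paper expends nearly all of its effort (the carry-sequence and surgery analysis of Sections 5--6), so the outline is right but the real work remains in that step.
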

The original conjecture used the exponent $d_o=2\cdot 3^{r_o}+1$ with $4 r_0 \equiv -1 \pmod{n}$ where we use $d$.
But note that our $d \equiv 3^r d_o \pmod{q-1}$, so that the canonical additive character has $\psi(x^d)=\psi(x^{d_0})$ for all $x \in F$, and so $\Wfd=W_{F,d_o}$.
Also note that the condition $4 r \equiv 1 \pmod{n}$ does indeed make $d=3^r+2$ coprime to $q-1=3^n-1$,\footnote{For there is some positive integer $a$ with $4 r=a n+1$, and then $\gcd(3^r+2,3^n-1)$ is a divisor of $\gcd(3^{4 r}-16,3^{a n+1}-3) = \gcd(3-16,3^{a n+1}-3)$, which is in turn a divisor of $13$.  Thus $\gcd(3^r+2,3^n-1) \mid 13$, and yet $3^r+2 \equiv 3, 5, \text{ or } 11 \pmod{13}$ for every $r$.}\label{Edward} so that the family of three-valued Weil sums described in the conjecture meets the conditions we set down at the beginning of this section.

The rest of this paper is organized as follows.
In \refsec{Fred}, we show that the proof of \refconj{Alice} can be deduced if one knows two things: the sum of fourth powers of the values $\Wfda$, and the extent of $3$-divisibility of these same values.
Accordingly, the sum of fourth powers is determined in \refsec{SECMOM}, and the $3$-divisibility is determined in Sections \ref{SECGEN}--\ref{SECFREE}.
After some facts from the general theory of divisibility of character sums in \refsec{SECGEN}, we present two proofs of the divisibility result that we need: a very short computer-assisted proof in \refsec{SECCOMP}, and a somewhat technical computer-free proof in \refsec{SECFREE}.

\section{Method of Proof}\label{Fred}

As in the Introduction, we assume that $F$ is a finite field, that $\psi\colon F \to \C$ is the canonical additive character, that $d$ is a positive integer with $\gcd(d,\cFu)=1$, and that
\[
\Wfda=\sum_{x \in F} \psi(x^d-a x)
\]
for $a \in F$.
Here we show that \refconj{Alice} can be deduced from two propositions, whose proofs constitute the remaining sections of this paper.
This way to a proof had been proposed in \cite[p.~1475]{DHKM} by the authors of \refconj{Alice}, who noted that they had made some progress with this program, but they did not present details of their partial results.

The first proposition we need entails an exact calculation of the fourth power moment of the Weil sum.
\begin{proposition}\label{Dorothy}
If $F$ is a finite field of order $q=3^n$ with $n$ odd, and $d=3^r+2$ with $\gcd(d,q-1)=\gcd(r,n)=1$, then 
\[
\sum_{a \in \Fu} \Wfda^4 = 3 q^3.
\]
\end{proposition}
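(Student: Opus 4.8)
The plan is to compute the fourth moment by the standard route: expand, apply additive orthogonality to collapse the sum over $a$, and reduce everything to a single coincidence count for a fixed polynomial map. Since $d=3^r+2$ is odd and $F$ has characteristic $3$, the substitution $x\mapsto -x$ shows that $\Wfda$ is real, so $\Wfda^4=|\Wfda|^4$; and since $\Wfd(0)=0$, I may sum freely over all of $F$ rather than only $\Fu$.

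First I would write $|\Wfda|^2=\sum_{x,y\in F}\psi\big((x^d-y^d)-a(x-y)\big)$, square this, and sum over $a\in F$. The inner character sum $\sum_{a\in F}\psi(-a(\cdots))$ vanishes unless $x_1-y_1=x_2-y_2$, so writing the common difference as $t$ and setting $x_i=y_i+t$ gives
\[
\sum_{a\in F}\Wfda^4 = q\sum_{t\in F}\Big|\sum_{y\in F}\psi\big((y+t)^d-y^d\big)\Big|^2 .
\]
The term $t=0$ contributes $q^2$. For $t\neq 0$ the crucial observation is that the substitution $y=tz$ factors the inner polynomial: with $\sigma(x)=x^{3^r}$ one finds $\big((y+t)^d-y^d\big)\big|_{y=tz}=t^d\,h(z)$, where $h(z)=-z^{3^r+1}+z^{3^r}+z^2-z+1$. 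Because $\gcd(d,q-1)=1$, the scalar $c=t^d$ runs over $\Fu$ as $t$ does, so the $t$-sum becomes $\sum_{c\in\Fu}\big|\sum_z\psi(c\,h(z))\big|^2$. Orthogonality over $c\in\Fu$ then collapses this to $qM-q^2$, where $M=\#\{(z_1,z_2)\in F^2:h(z_1)=h(z_2)\}$, and assembling the pieces yields $\sum_{a\in\Fu}\Wfda^4=q^2M$. Thus the proposition is equivalent to the clean statement $M=3q$.

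To evaluate $M$, I would count coincidences through differences: writing $z_1=z+w$, the equation $h(z+w)=h(z)$ becomes, after cancellation, an affine-linearized equation in $z$,
\[
\Phi_w(z):=w\,z^{3^r}+(w^{3^r}+w)\,z = h(w)-1 ,
\]
where $\Phi_w$ is $\F_3$-linear in $z$. The slice $w=0$ contributes $q$. For $w\neq 0$ the number of $z$ is governed by $\ker\Phi_w$: since $\gcd(r,n)=1$ forces $\gcd(3^r-1,q-1)=2$, this kernel has $\F_3$-dimension at most $1$, being one-dimensional exactly when $-(w^{3^r-1}+1)$ is a nonzero square in $F$. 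When $\Phi_w$ is bijective there is a unique $z$; when the kernel is one-dimensional there are either $3$ solutions or none, according as $h(w)-1$ lies in $\operatorname{im}\Phi_w$ or not.

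The hard part will be the final bookkeeping: counting, as $w$ ranges over $\Fu$, how many $w$ make $\Phi_w$ injective versus degenerate and---the genuinely delicate point---for how many degenerate $w$ the affine equation $\Phi_w(z)=h(w)-1$ is actually consistent. I expect to resolve the square/nonsquare dichotomy and the consistency condition by introducing the quadratic multiplicative character of $F$ and evaluating the resulting sums, which amounts to counting points on the associated curves; assembling these counts should produce exactly $M=3q$, with the three families of solutions accounting for the factor of $3$ in $3q^3$. This multiplicative-character curve count is where essentially all of the difficulty lies, and I would expect it to occupy the bulk of the argument.
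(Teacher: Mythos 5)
Your reduction is correct as far as it goes, and it is a genuinely different decomposition from the paper's: you expand $\Wfda^4$ as the square of an autocorrelation, collapse first over $a$ and then over $c=t^d$, and arrive at $\sum_{a\in\Fu}\Wfda^4=q^2M$ with $M=\#\{(z_1,z_2)\in F^2:h(z_1)=h(z_2)\}$ for $h(z)=-z^{3^r+1}+z^{3^r}+z^2-z+1$. I have checked the algebra (including $\bigl((y+t)^d-y^d\bigr)\big|_{y=tz}=t^d h(z)$ and the identification of $\Phi_w(z)=wz^{3^r}+(w^{3^r}+w)z$ with right-hand side $h(w)-1$), and it is sound; the proposition is indeed equivalent to $M=3q$. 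The paper instead reparameterizes the quadruple sum by $t=x+y+z$, $u=-(x+z)$, $v=-(y+z)$, $w=z$, organizes everything through the symmetric trilinear form $\Tr(x^{3^r}yz+xy^{3^r}z+xyz^{3^r})$, and obtains $\sum_{a\in\Fu}\Wfda^4=q^2\card{K}$, where $K$ is the set of $(x,y)$ with $x^{3^{2r}}y^{3^r}+x^{3^r}y^{3^{2r}}+xy=0$. Both reductions are valid and both leave a count of size $3q$ to be established.

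The gap is that you never establish $M=3q$: the entire final step is presented as an expectation, and the difficulty you have deferred is real and is not just ``introduce the quadratic character.'' For each degenerate $w$ (those with $-(w^{3^r-1}+1)$ a nonzero square, so that $\ker\Phi_w$ is one-dimensional) you must decide whether $h(w)-1$ lies in $\operatorname{im}\Phi_w$, an index-$3$ additive subgroup of $F$ that varies with $w$; this is a trace condition $\Tr\bigl(\mu_w(h(w)-1)\bigr)=0$ with $\mu_w$ depending on $w$, and converting the resulting count into a tractable character sum is precisely the part of the argument you have not supplied. This is where the paper's homogeneous formulation pays off: $K$ visibly contains the $2q-1$ points with $xy=0$, and on $x,y\neq 0$ the substitution $x=wy$ reduces the defining equation, after two gcd computations using $\gcd(d,q-1)=1$ and $\gcd(r,n)=1$, to counting solutions of $v^2=-w^{3^r-1}(w^{3^{2r}-3^r}+1)$, which equals $q+1$ by the classical evaluation $\sum_{u\in F}\eta(u^2+1)=-1$; no solvability dichotomy ever arises. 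To complete your version you would need either to carry out the image-membership count honestly or to reformulate your coincidence set homogeneously so that it, too, collapses to a single quadratic character sum.
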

The second proposition gives the $3$-divisibility of the values of the Weil sum.
\begin{proposition}\label{Elaine}
If $F$ is a finite field of order $q=3^n$ with $n$ odd, and $d=3^r+2$ with $4 r \equiv 1 \pmod{n}$, then $\Wfda$ is a rational integer divisible by $\sqrt{3 q}$ for each $a \in F$.
\end{proposition}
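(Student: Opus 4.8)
The plan is to prove two things separately: that $\Wfda$ is a rational integer, and that $\valp(\Wfda)\ge(n+1)/2$ when $\valp$ is extended to $\overline{\Q}$ with $\valp(3)=1$. Since $n$ is odd, $(n+1)/2\in\Z$ and $\sqrt{3q}=3^{(n+1)/2}$, so these two facts together give the claim. Rationality is the easy part. Writing $\psi(y)=\zeta_3^{\Tr(y)}$ for the absolute trace $\Tr\colon F\to\Fp$, we have $\overline{\psi(y)}=\psi(-y)$, so $\overline{\Wfda}=\sum_{x\in F}\psi(ax-x^d)$. Substituting $x\mapsto-x$ and using that $d=3^r+2$ is odd, whence $(-x)^d=-x^d$, returns this to $\Wfda$. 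Thus $\Wfda$ is real; being also an algebraic integer in $\Q(\zeta_3)$, whose maximal real subfield is $\Q$, it lies in $\Z$. The case $a=0$ is immediate, since $x\mapsto x^d$ permutes $F$ and hence $\Wfd(0)=\sum_{x\in F}\psi(x)=0$, which is divisible by $\sqrt{3q}$; so I assume $a\in\Fu$ from now on.

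For the divisibility when $a\in\Fu$, I would pass to Gauss sums $G(\chi)=\sum_{x\in\Fu}\chi(x)\psi(x)$. Using the inversion formula $\psi(u)=\frac{1}{q-1}\sum_{\chi}\overline\chi(u)G(\chi)$, valid for $u\in\Fu$, one expands $\psi(x^d)$ and $\psi(-ax)$ and collapses the resulting double character sum by orthogonality; every $x\ne0$ contributes, and orthogonality forces the second character to be $\chi^{-d}$, yielding
\[
\Wfda=\frac{q}{q-1}+\frac{1}{q-1}\sum_{\chi\ne\chi_0}G(\chi)\,G(\chi^{-d})\,\chi^d(-a),
\]
where $\chi$ runs over the nontrivial multiplicative characters of $\Fu$ and the $x=0$ term has been combined with the trivial-character term into $q/(q-1)$. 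Because $\gcd(d,q-1)=1$, as $\chi$ runs over nontrivial characters so does $\chi^{-d}$, so every Gauss sum appearing is nontrivial.

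Now I would estimate valuations. The factors $\chi^d(-a)$ and $q-1$ are units at a prime above $3$ (roots of unity of order prime to $3$, respectively an integer prime to $3$), and $\valp(q/(q-1))=n\ge(n+1)/2$. So by the ultrametric inequality it suffices to show $\valp(G(\chi)G(\chi^{-d}))\ge(n+1)/2$ for every nontrivial $\chi$. By Stickelberger's theorem, writing $\chi=\omega^{-k}$ for the Teichm\"uller character $\omega$ and $k\in\{1,\dots,q-2\}$, one has $\valp(G(\chi))=s_3(k)/2$, where $s_3(k)$ is the sum of the base-$3$ digits of $k$. Since $\chi^{-d}=\omega^{-\langle-dk\rangle}$, where $\langle m\rangle$ denotes the residue of $m$ in $\{0,\dots,q-2\}$, the required bound is equivalent to the purely combinatorial statement
\[
s_3(k)+s_3(\langle-dk\rangle)\ge n+1\qquad\text{for all }k\in\{1,\dots,q-2\}.
\]
Using the digit-complement identity $s_3(\langle-m\rangle)=2n-s_3(\langle m\rangle)$, valid because $q-1=(2,\dots,2)_3$ has all digits maximal, this reads equivalently as $s_3(\langle dk\rangle)\le s_3(k)+(n-1)$.

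The main obstacle is exactly this digit-sum inequality, and this is where the hypothesis $4r\equiv1\pmod n$ (rather than merely $\gcd(r,n)=1$, which sufficed for \refprop{Dorothy}) must enter. Multiplication by $3^r$ modulo $3^n-1$ cyclically rotates the $n$ base-$3$ digits and so preserves $s_3$; thus, writing $dk=3^rk+2k$, the entire difficulty lies in controlling how the doubling $k\mapsto 2k$, interleaved with a cyclic shift by $r$, can inflate the digit sum through carries. I expect a direct attack to require a careful case analysis of the carry patterns, organized by the residues of digit positions modulo $r$; the relation $4r\equiv1\pmod n$ is what pins down this carry structure and forces the bound $n+1$, which is in fact sharp (for example, for $n=5$, $r=4$, $d=83$, equality holds at $k=10$). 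This inequality is the content to be carried out in the remaining sections, first by a short computer-assisted verification and then by a computer-free argument.
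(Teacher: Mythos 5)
Your reduction is correct and is essentially the route the paper itself takes in \refsec{SECGEN}. For rationality the paper simply cites Helleseth's criterion ($\Wfda\in\Z$ for all $a$ iff $d\equiv 1\pmod{p-1}$), but your direct conjugation argument (using that $d$ is odd and that the real subfield of $\Q(e^{2\pi i/3})$ is $\Q$) is a correct substitute. Your Gauss-sum expansion of $\Wfda$ is right, the unit/ultrametric bookkeeping is sound, and your application of Stickelberger reduces the divisibility claim to exactly the paper's inequality \eqref{Matilda}, namely $\wt(k)+\wt(-dk)\ge n+1$ for all nonzero $k$ in $\Ztn$ (the paper reaches the same point through \refprop{Anthony}, which packages the Gauss-sum step as a cited lemma giving an exact formula for $\min_{a\in\Fu}\valp(\Wfda)$; your hands-on expansion gives the one-sided bound, which is all that is needed).

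The gap is that you stop precisely where the substance of \refprop{Elaine} begins. The digit-sum inequality is not a routine carry computation that can be waved at and ``carried out in the remaining sections'': it is the entire difficulty of the proposition, and it consumes Sections \ref{SECCOMP} and \ref{SECFREE} of the paper. The paper's two arguments both hinge on a specific device your sketch does not contain: reindexing digit positions by $i\mapsto ri$, so that (because $d=2+3^r$) the shift by $r$ becomes a shift by one, and (because $4r\equiv 1\pmod n$) the carry at position $j$ is fed by the carry at position $j-4$. The computer-assisted proof then encodes the local data $(X_{j-1},X_j,C_{j-4},\ldots,C_{j-1})$ as vertices of a weighted digraph on $3^6$ states and verifies via Tarjan and Bellman--Ford that no negative-cost circuit exists; the computer-free proof takes a minimal counterexample, forbids certain digit patterns by five weight-decreasing ``surgeries,'' classifies positions into nine motifs, and eliminates all but two by succession rules until the bound $n+1$ drops out by counting. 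Your closing paragraph of intuition (``case analysis of carry patterns, organized by the residues of digit positions modulo $r$'') gestures at this but is not a proof, and the proposed organization by residues modulo $r$ is not the right one (since $\gcd(r,n)=1$, multiplication by $r$ is a single cycle on $\Zn$; the relevant structure is the distance-$4$ carry linkage after reindexing). Until the inequality $\wt(k)+\wt(-dk)\ge n+1$ is actually established, the proposal proves only the standard reduction and not the proposition.
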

These combine to give a proof of \refconj{Alice} as follows.
\begin{theorem}
If $F$ is a finite field of order $q=3^n$ with $n$ odd, $n > 1$, and $d=3^r+2$ with $4 r \equiv 1 \pmod{n}$, then $W_{F,d}$ is three-valued with
\[
W_{F,d} = \begin{cases}
0 & \text{for $q-q/3-1$ values of $a \in \Fu$,} \\
+\sqrt{3 q} & \text{for $(q+\sqrt{3 q})/6$ values of $a \in \Fu$, and} \\
-\sqrt{3 q} & \text{for $(q-\sqrt{3 q})/6$ values of $a \in \Fu$.}
\end{cases}
\]
\end{theorem}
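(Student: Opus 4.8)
The plan is to combine \refprop{Dorothy} and \refprop{Elaine} with the two lowest power moments of $\Wfd$ and then read off the value distribution by solving a small linear system. Since $q=3^n$ with $n$ odd, the number $s:=\sqrt{3 q}=3^{(n+1)/2}$ is a positive integer, so \refprop{Elaine} lets me write $\Wfda=s\,m_a$ with $m_a\in\Z$ for every $a\in\Fu$. First I would record two elementary moment identities. The first power moment $\sum_{a\in\Fu}\Wfda=q$ comes from interchanging the order of summation (for fixed $x$ the inner sum $\sum_{a\in F}\psi(-ax)$ vanishes unless $x=0$) together with $\Wfd(0)=0$, which holds because $\gcd(d,q-1)=1$ makes $x\mapsto x^d$ a permutation of $F$; the second power moment $\sum_{a\in\Fu}\Wfda^2=q^2$ is the identity quoted in the Introduction. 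In terms of the $m_a$ these read $\sum_{a}m_a=q/s=\sqrt{3 q}/3$ and $\sum_{a}m_a^2=q^2/s^2=q/3$.

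The key step is to feed in the fourth moment. By \refprop{Dorothy}, $\sum_{a\in\Fu}\Wfda^4=3 q^3$, which normalizes to $\sum_{a}m_a^4=3 q^3/s^4=q/3$. Subtracting the second-moment relation gives $\sum_{a\in\Fu}m_a^2(m_a^2-1)=0$. Here every summand is a nonnegative integer, because $m_a\in\Z$ forces $m_a^2-1\ge 0$ whenever $m_a\ne 0$; hence each term vanishes and $m_a\in\{-1,0,1\}$ for all $a$. Therefore $\Wfda\in\{-s,0,+s\}$, so $\Wfd$ is at most three-valued.

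Finally I would count. Letting $N_+$, $N_-$, $N_0$ denote the numbers of $a\in\Fu$ with $m_a$ equal to $1$, $-1$, $0$, the three relations $N_++N_-+N_0=q-1$, $N_+-N_-=\sum_{a}m_a=\sqrt{3 q}/3$, and $N_++N_-=\sum_{a}m_a^2=q/3$ form a linear system whose unique solution is $N_+=(q+\sqrt{3 q})/6$, $N_-=(q-\sqrt{3 q})/6$, and $N_0=q-q/3-1$, exactly the claimed distribution. The hypothesis $n>1$ gives $q>\sqrt{3 q}$, so all three counts are strictly positive and each value is genuinely attained, making $\Wfd$ exactly three-valued.

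I expect no real obstacle in this deduction itself: all of the difficulty is concentrated in \refprop{Dorothy} and \refprop{Elaine}, which occupy the remaining sections. The one point demanding care is the nonnegativity argument above, which is precisely why \refprop{Elaine} must deliver the sharp divisor $\sqrt{3 q}$ — a weaker divisibility statement would not pin the $m_a$ down to $\{-1,0,1\}$ — and why the exact agreement of the normalized second and fourth moments is the crux of the whole reduction.
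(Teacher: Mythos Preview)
Your proposal is correct and follows essentially the same route as the paper: combine the second and fourth power moments to obtain $\sum_{a}\Wfda^{2}\bigl(\Wfda^{2}-3q\bigr)=0$ (your normalized version $\sum_a m_a^2(m_a^2-1)=0$ is the same identity after dividing by $s^4$), invoke \refprop{Elaine} for the nonnegativity of each term, and then solve the resulting $3\times 3$ linear system. Your explicit remark that $n>1$ forces all three counts to be strictly positive is a welcome detail that the paper leaves implicit.
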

\begin{proof}
The first two power moments of the Weil sum are well known (see, e.g., \cite[Proposition 3.1]{Katz}) as
\begin{align}
\sum_{a \in \Fu} \Wfda & = q, \label{Albert} \\
\sum_{a \in \Fu} \Wfda^2 & = q^2. \label{Christopher}
\end{align}
Now note that Proposition \ref{Dorothy} applies since the condition $4 r \equiv 1 \pmod{n}$ clearly makes $\gcd(r,n)=1$ and also makes $\gcd(d,q-1)=1$ by \reffoot{Edward} in the Introduction.
Then \eqref{Christopher} and Proposition \ref{Dorothy} show that
\[
\sum_{a \in \Fu} \Wfda^2(W_{F,d}^2-3 q) = 0,
\]
and Proposition \ref{Elaine} shows that the individual terms of this sum are nonnegative.
Thus all terms must be zero, and so $\Wfda \in \{0,\pm \sqrt{3 q}\}$ for all $a \in \Fu$.
If we let $N_0$, $N_+$, and $N_-$ denote the number of $a \in \Fu$ such that $\Wfda$ equals $0$, $+\sqrt{3 q}$, and $-\sqrt{3 q}$, respectively, then the total count of $\Fu$, along with \eqref{Albert} and \eqref{Christopher}, gives the system
\begin{align*}
N_0 + N_+ + N_- & = q-1 \\
\sqrt{3 q} N_+ - \sqrt{3 q} N_- & = q \\
3 q N_+ + 3 q N_- & = q^2,
\end{align*}
whence we deduce the claimed frequencies.
\end{proof}

\section{Fourth Power Moment}
\label{SECMOM}
The purpose of this section is to prove Proposition \ref{Dorothy}, 
which requires us to compute precisely the fourth 
power moment of our Weil sum.
Throughout this section, we assume that $F$ is a finite 
field of characteristic $p$ and order $q=p^n$, 
and that $\Tr\colon F\to\Fp$ is the absolute trace.
We let $\epsilon\colon \Fp \to \C$ be the canonical 
additive character of $\Fp$, that is, $\epsilon(x)=\exp(2\pi i x/p)$, 
and we let $\psi\colon F \to \C$ be the canonical 
additive character of $F$, that is, $\psi(x)=\epsilon(\Tr(x))$.
We also assume that $d=2+p^r$ for some nonnegative integer $r$ 
such that $\gcd(d,q-1)=1$, and define the Weil sum as usual:
\[
\Wfda=\sum_{x \in F} \psi(x^d-a x).
\]
We use the abbreviation $\bx$ for $x^{p^r}$, so that $x^d=\bx x^2$.

If we consider $F$ as a $\Fp$-vector space with $\Fp$-basis $\beta_1,\ldots,\beta_n$, and expand $x \in F$ as $x=x_1 \beta_1+\cdots+x_n \beta_n$ with $x_1,\ldots,x_n \in \Fp$, then $\Tr(x^d)$ is a cubic form in $x_1,\ldots,x_n$ over $\Fp$.
This kind of object is considered in \cite{LS}, which inspired the method we use here.

We define a symmetric $\Fp$-trilinear form on $F$,
\begin{equation}\label{Therese}
\tlf{x}{y}{z}=\Tr(\bx y z + x \by z + x y \bz),
\end{equation}
and we express the fourth power of our Weil sum in terms of this form.
\begin{lemma}\label{Emily}
We have
\[
\sum_{a \in \Fu} \Wfda^4 = q \sum_{x,y,z} \epsilon(\tlf{x}{y}{x}+\tlf{x}{y}{y}+2 \tlf{x}{y}{z}).
\]
\end{lemma}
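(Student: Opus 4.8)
The plan is to expand the fourth power into a quadruple sum over $F$, collapse the sum over $a$ into a single linear constraint, and then recognize the resulting cubic character sum as a polarization of the cubic form $Q(u)=\Tr(u^d)$. First I would multiply out
\[
\Wfda^4=\sums{s_1,s_2,s_3,s_4\in F}\psi(s_1^d+s_2^d+s_3^d+s_4^d)\,\psi\bigl(-a(s_1+s_2+s_3+s_4)\bigr)
\]
and sum over $a\in\Fu$, moving the sum inside so that only the second factor depends on $a$.

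By orthogonality, $\sum_{a\in\Fu}\psi(-as)$ equals $q-1$ when $s=0$ and $-1$ otherwise; writing it as $q\cdot[s=0]-1$, the constant term contributes $-\bigl(\sum_{s\in F}\psi(s^d)\bigr)^4$, which vanishes because $\gcd(d,\cFu)=1$ makes $s\mapsto s^d$ a permutation of $F$, so $\sum_s\psi(s^d)=\sum_s\psi(s)=0$. Hence
\[
\sum_{a\in\Fu}\Wfda^4=q\sums{s_1+s_2+s_3+s_4=0}\psi(s_1^d+s_2^d+s_3^d+s_4^d),
\]
and since each $\Tr(s_i^d)=Q(s_i)$ lies in $\Fp$ I may replace $\psi$ by $\epsilon$, obtaining $q\sum_{s_1+\cdots+s_4=0}\epsilon\bigl(Q(s_1)+Q(s_2)+Q(s_3)+Q(s_4)\bigr)$.

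It remains to match this hyperplane sum with the claimed expression, and the key is that $\tlf{\cdot}{\cdot}{\cdot}$ is the symmetric polarization of $Q$. A short monomial computation gives $\tlf{x}{y}{x}+\tlf{x}{y}{y}=Q(x+y)-Q(x)-Q(y)$ and $2\tlf{x}{y}{z}=Q(x+y+z)-Q(x+y)-Q(y+z)-Q(z+x)+Q(x)+Q(y)+Q(z)$. Adding these and cancelling yields
\[
\tlf{x}{y}{x}+\tlf{x}{y}{y}+2\tlf{x}{y}{z}=Q(x+y+z)-Q(y+z)-Q(x+z)+Q(z),
\]
and since $Q(-u)=-Q(u)$ in odd characteristic this is $Q(x+y+z)+Q(-y-z)+Q(-x-z)+Q(z)$: a sum of four values of $Q$ whose four arguments sum to $0$.

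Finally I would observe that $(x,y,z)\mapsto(x+y+z,\,-y-z,\,-x-z,\,z)$ is an $F$-linear bijection from $F^3$ onto the hyperplane $\{(s_1,s_2,s_3,s_4):s_1+s_2+s_3+s_4=0\}$. Re-indexing the hyperplane sum by this map turns it into $\sum_{x,y,z}\epsilon(\tlf{x}{y}{x}+\tlf{x}{y}{y}+2\tlf{x}{y}{z})$, which is the identity to be proved. The main obstacle is this middle step: one must recognize $\tlf{\cdot}{\cdot}{\cdot}$ as the polar form of $Q$ and, above all, spot the linear substitution that rewrites the asymmetric trilinear combination as a symmetric zero-sum quadruple of cubes. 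A more pedestrian alternative expands every term into monomials such as $\Tr(\bx yz)$ and matches them using the Frobenius invariance $\Tr(v)=\Tr(v^{p^r})$; this sidesteps the guess but demands careful bookkeeping.
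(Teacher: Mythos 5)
Your proposal is correct and follows essentially the same route as the paper: the paper likewise reduces the sum over $a$ to the zero-sum constraint $t+u+v+w=0$, applies the very same substitution $(t,u,v,w)=(x+y+z,-(x+z),-(y+z),z)$ using that $d$ is odd, and identifies the resulting exponent with $\tlf{x}{y}{x}+\tlf{x}{y}{y}+2\tlf{x}{y}{z}$. The only cosmetic difference is that you derive the identity via the polarization of $Q(u)=\Tr(u^d)$ while the paper expands $(x+y+z)^d-(x+z)^d-(y+z)^d+z^d$ directly into monomials; both computations are equivalent and your intermediate identities check out.
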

\begin{proof}
Since $\Wfd(0)=0$, we change nothing by summing $\Wfda$ over all $a \in F$, so 
\begin{align*}
\sum_{a \in \Fu} \Wfda^4 
& = \sum_{a,t,u,v,w \in F} \psi(t^d+u^d+v^d+w^d-a(t+u+v+w)) \\
& = q \sums{t,u,v,w \in F \\ t+u+v+w =0} \psi(t^d+u^d+v^d+w^d) \\
& = q \sum_{x,y,z \in F} \psi((x+y+z)^d-(x+z)^d-(y+z)^d+z^d),
\end{align*}
where we have reparameterized with $t=x+y+z$, $u=-(x+z)$, $v=-(y+z)$, and $w=z$ in the last step, and used the fact that our condition $\gcd(d,q-1)=1$ makes $d$ odd when we are in odd characteristic.
Now use the fact that $s^d=s^2 \bs$ to expand out $(x+y+z)^d-(x+z)^d-(y+z)^d+z^d$ to obtain
\[
2 x \bx y + x^2 \by + 2 x y \by + \bx y^2 +  2 \bx y z + 2 x \by z + 2 x y \bz,
\]
so that the trace of this quantity is $\tlf{x}{y}{x}+\tlf{x}{y}{y}+2\tlf{x}{y}{z}$, which completes the proof, since $\psi=\epsilon \circ \Tr$.
\end{proof}
If we fix $x$ and $y$, then $z\mapsto \tlf{x}{y}{z}$ is 
an $\Fp$-linear form.
Let the kernel $K$ be the set of $(x,y) \in F^2$ that make this the zero functional:
\[
K=\{(x,y) \in F^2 : \tlf{x}{y}{z}=0 \text{ for every } z \in F\}.
\]
Then a consequence of our previous result is that the fourth power moment is related to $\card{K}$.
\begin{corollary}\label{Lawrence}
If our field $F$ is of odd characteristic, then
\[
\sum_{a \in \Fu} \Wfda^4 = q^2 \card{K}.
\]
\end{corollary}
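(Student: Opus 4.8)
The plan is to take the identity from \reflem{Emily} and collapse the innermost sum over $z$ by additive-character orthogonality. First I would fix $x$ and $y$ and isolate the $z$-dependence. Since $\epsilon$ is a homomorphism of $(\Fp,+)$ into $\C^\times$, the summand factors as
\[
\epsilon(\tlf{x}{y}{x}+\tlf{x}{y}{y}+2\tlf{x}{y}{z}) = \epsilon(\tlf{x}{y}{x}+\tlf{x}{y}{y})\,\epsilon(2\tlf{x}{y}{z}),
\]
and the first factor does not involve $z$, so summing over $z$ gives
\[
\sum_{z \in F} \epsilon(\tlf{x}{y}{x}+\tlf{x}{y}{y}+2\tlf{x}{y}{z}) = \epsilon(\tlf{x}{y}{x}+\tlf{x}{y}{y}) \sum_{z \in F} \epsilon(2\tlf{x}{y}{z}).
\]

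The key observation is that $z \mapsto 2\tlf{x}{y}{z}$ is an $\Fp$-linear functional on $F$, and because $F$ has odd characteristic, $2$ is invertible, so this functional is identically zero precisely when $z \mapsto \tlf{x}{y}{z}$ is, that is, precisely when $(x,y) \in K$. By orthogonality of additive characters, $\sum_{z \in F} \epsilon(2\tlf{x}{y}{z})$ equals $q$ when $(x,y) \in K$ and equals $0$ otherwise.

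It then remains to treat the leading factor in the surviving case. When $(x,y) \in K$, the functional $\tlf{x}{y}{\cdot}$ vanishes on all of $F$, in particular at $z=x$ and $z=y$, so $\tlf{x}{y}{x}=\tlf{x}{y}{y}=0$ and the prefactor $\epsilon(\tlf{x}{y}{x}+\tlf{x}{y}{y})$ equals $\epsilon(0)=1$. Hence each pair $(x,y)\in K$ contributes exactly $q$ to the triple sum and every other pair contributes $0$, giving
\[
\sum_{x,y,z} \epsilon(\tlf{x}{y}{x}+\tlf{x}{y}{y}+2\tlf{x}{y}{z}) = q\,\card{K}.
\]
Multiplying by the factor $q$ from \reflem{Emily} yields $\sum_{a \in \Fu}\Wfda^4 = q^2\card{K}$. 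I do not anticipate any real obstacle here: the argument is a routine application of character orthogonality, and the only points requiring care are the use of odd characteristic to invert $2$ and the remark that membership in $K$ forces the diagonal terms $\tlf{x}{y}{x}$ and $\tlf{x}{y}{y}$ to vanish.
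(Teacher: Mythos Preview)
Your proof is correct and follows essentially the same approach as the paper: factor the summand, collapse the inner sum over $z$ by character orthogonality (using odd characteristic so that $2$ is a unit and the functional $z\mapsto 2\tlf{x}{y}{z}$ is nontrivial exactly when $(x,y)\notin K$), and then observe that membership in $K$ kills the diagonal terms $\tlf{x}{y}{x}$ and $\tlf{x}{y}{y}$. If anything, you are slightly more explicit than the paper about why the odd-characteristic hypothesis is needed.
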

\begin{proof}
From \reflem{Emily}, we have
\[
\sum_{a \in \Fu} \Wfda^4 = q \sum_{(x,y) \in F^2} \epsilon(\tlf{x}{y}{x}+\tlf{x}{y}{y}) \sum_{z \in F} \epsilon(2 \tlf{x}{y}{z}).
\]
If $(x,y) \not\in K$, then $z\mapsto 2 \tlf{x}{y}{z}$ is a nontrivial $\Fp$-linear functional, so as $z$ runs through $F$, the value of $2 \tlf{x}{y}{z}$ runs through $\Fp$, taking each value equally often, thus making the sum over $z$ vanish.
So we can restrict our sum over $(x,y)$ to $K$ to get
\begin{align*}
\sum_{a \in \Fu} \Wfda^4 
& = q \sum_{(x,y) \in K} \epsilon(\tlf{x}{y}{x}+\tlf{x}{y}{y}) \sum_{z \in F} \epsilon(2 \tlf{x}{y}{z}) \\
& = q \sum_{(x,y) \in K} \epsilon(0+0) \sum_{z \in F} \epsilon(0) \\
& = q^2 \card{K},
\end{align*}
where we use the definition of $K$ in the middle step.
\end{proof}
Now it remains to compute the size of $K$.
First we find a useful characterization of $K$ as the set of $F$-rational points on a curve.
\begin{lemma}\label{Konrad}
We have $K=\{(x,y) \in F^2: \bbx \by+\bx \bby + x y=0\}$.
\end{lemma}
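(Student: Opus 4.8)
The plan is to identify $K$ through the nondegeneracy of the trace pairing. For fixed $(x,y)$, the map $z \mapsto \tlf{x}{y}{z}$ is an $\Fp$-linear functional on $F$, and since the symmetric bilinear form $(u,z)\mapsto \Tr(uz)$ is nondegenerate, every such functional equals $z\mapsto \Tr(c\,z)$ for a unique $c=c(x,y) \in F$. Thus $(x,y) \in K$ if and only if $c(x,y)=0$, and the entire task reduces to computing $c$ explicitly and recognizing when it vanishes.

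To find $c$, I would expand $\tlf{x}{y}{z}=\Tr(\bx y z)+\Tr(x\by z)+\Tr(xy\bz)$. The first two terms are already linear in $z$ with coefficient $\bx y + x\by$. The third term hides $z$ inside a $p^r$-th power through $\bz=z^{p^r}$, so I would invoke the Frobenius-invariance of the absolute trace, $\Tr(w)=\Tr(w^{p^k})$ for all $k$, to transfer that power onto the coefficient: raising the argument to the $p^{n-r}$ power and using $z^{p^n}=z$ gives $\Tr(xy\bz)=\Tr\big((xy)^{p^{n-r}}z\big)$. Hence $c=\bx y + x\by + (xy)^{p^{n-r}}$.

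The last step is where the stated symmetric shape appears. Since the $p^r$-th power map is an automorphism of $F$, we have $c=0$ if and only if $c^{p^r}=0$, and raising $c$ to the $p^r$ power sends $\bx y$ to $\bbx\by$, sends $x\by$ to $\bx\bby$, and sends $(xy)^{p^{n-r}}$ to $(xy)^{p^n}=xy$. This produces exactly $\bbx\by+\bx\bby+xy$, yielding the claimed description of $K$.

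I do not anticipate a genuine obstacle here; the only point requiring care is the exponent bookkeeping in the two Frobenius manipulations, namely choosing the correct power $p^{n-r}$ to clear $z$ from the third term and then recalling that $p^n$-th powering acts as the identity on $F$. Everything else follows directly from the nondegeneracy and Frobenius-invariance of the trace form.
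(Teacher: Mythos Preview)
Your proof is correct and follows essentially the same route as the paper: both arguments use Frobenius-invariance of the trace to rewrite $\tlf{x}{y}{z}$ as the trace of a single element times a bijective image of $z$, and then invoke nondegeneracy of the trace form. The only cosmetic difference is that the paper applies the $p^r$-th power inside the trace to the first two summands so that everything becomes a multiple of $\bz$, whereas you apply the $p^{n-r}$-th power to the third summand so that everything becomes a multiple of $z$ and then raise the resulting coefficient to the $p^r$-th power at the end; these are inverse manipulations yielding the same vanishing condition.
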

\begin{proof}
We note that $\Tr(\bs)=\Tr(s)$ for any $s \in F$, because $\Tr(s^p)=\Tr(s)$, which means that the definition \eqref{Therese} of our trilinear form is equivalent to
\begin{align*}
\tlf{x}{y}{z} 
& = \Tr(\overline{\bx y z} + \overline{x \by z}+x y \bz) \\
& = \Tr((\bbx \by+\bx \bby+x y) \bz),
\end{align*}
and since $\Tr$ is a nonzero $\Fp$-functional of $F$ and $z\mapsto \bar{z}$ is an automorphism of $F$, our kernel $K$ is the set of $(x,y)$ that make $\bbx \by+\bx \bby+x y=0$.
\end{proof}
\begin{lemma}\label{Mary}
If our field $F$ is of characteristic $p=3$ and order $q=3^n$ with $n$ odd, and if our exponent $d=2+3^r$ has $\gcd(r,n)=1$, then $\card{K}=3 q$.
\end{lemma}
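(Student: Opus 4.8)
The plan is to evaluate $\card{K}$ directly from the description in \reflem{Konrad}, namely that $K$ consists of the $(x,y) \in F^2$ with $\bbx \by + \bx \bby + x y = 0$, where $\bx = x^{3^r}$ and $\bbx = x^{3^{2r}}$. First I would dispose of the \emph{axis points}, those with $x y = 0$: every such pair lies in $K$, contributing $2 q - 1$ points. It then remains to count the off-axis points, those with $x, y \in \Fu$. For these I would divide the defining equation by $x y$ and introduce the variables $s = x^{3^r - 1}$ and $t = y^{3^r - 1}$; since $3^{2r}-1 = (3^r-1)(3^r+1)$ gives $x^{3^{2r}-1} = s^{3^r} s$ and likewise for $y$, the equation collapses to the symmetric form
\[
s t (s + t)^{3^r} = -1 ,
\]
where I have used that $w \mapsto w^{3^r}$ is additive in characteristic $3$.

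The next step exploits the hypothesis $\gcd(r,n) = 1$. Because $\gcd(3^r - 1, 3^n - 1) = 3^{\gcd(r,n)} - 1 = 2$, the map $x \mapsto x^{3^r - 1}$ is a two-to-one homomorphism of $\Fu$ onto the group of nonzero squares of $F$. Hence each off-axis point $(x,y)$ produces a pair $(s,t)$ of nonzero squares solving the displayed equation, and conversely every such pair of squares is hit exactly $4$ times. So the number of off-axis points equals $4 N$, where $N$ counts the pairs of nonzero squares $(s,t)$ with $s t (s+t)^{3^r} = -1$. Writing the indicator of a nonzero square as $\tfrac12\bigl(1 + \eta(\cdot)\bigr)$ for $\eta$ the quadratic character of $F$, I would expand
\[
N = \frac14 \sum_{\substack{s,t \in \Fu \\ s t (s+t)^{3^r} = -1}} \bigl(1 + \eta(s)\bigr)\bigl(1 + \eta(t)\bigr) = \frac14\bigl(M + 2 S_1 + S_3\bigr),
\]
where $M$ is the total number of solutions, $S_1 = \sum \eta(s)$ (which equals $\sum \eta(t)$ by the symmetry of the equation), and $S_3 = \sum \eta(s t)$.

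To evaluate these three sums I would parametrize the solution set by $u = t/s$: for each $u \in \Fu$ with $u \neq -1$ the equation becomes $s^{3^r + 2} = -1/(u(1+u)^{3^r})$, which has a unique solution $s$ because $\gcd(d, q-1) = 1$ makes $w \mapsto w^d$ a bijection of $\Fu$. This immediately gives $M = q - 2$. For the character sums I would use that $d = 3^r + 2$ is odd and $q-1$ is even, so $d^{-1}$ is odd and $\eta(w^{d^{-1}}) = \eta(w)$; that $q = 3^n \equiv 3 \pmod 4$ for $n$ odd forces $\eta(-1) = -1$; and the standard evaluations $\sum_{u \in F} \eta(u(1+u)) = -1$ and $\sum_{u \in \Fu} \eta(u) = 0$. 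Since $\eta(st) = \eta(u)$ and $\eta(s) = -\eta(u(1+u))$ along the parametrization, these yield $S_1 = 1$ and $S_3 = 1$, whence $N = (M+3)/4 = (q+1)/4$, the off-axis count is $q+1$, and $\card{K} = (2q-1) + (q+1) = 3q$, as claimed.

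The main obstacle I anticipate is not any single calculation but the bookkeeping around the two-to-one covering: one must verify that $s = x^{3^r-1}$ and $t = y^{3^r-1}$ genuinely range over all nonzero squares with the correct multiplicity, which is exactly where the coprimality $\gcd(r,n) = 1$ enters through the identity $\gcd(3^r - 1, 3^n - 1) = 2$. Getting the reduction to the quadratic $\eta$ right — rather than to some higher-order character — depends on this gcd being exactly $2$; combined with the parity facts that $n$ odd gives $q \equiv 3 \pmod 4$ and that $d$ odd gives $d^{-1}$ odd, everything then lines up so that the two nontrivial character sums each contribute the clean value $+1$.
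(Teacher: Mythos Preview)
Your argument is correct. Both you and the paper start from the curve in \reflem{Konrad}, strip off the $2q-1$ axis points, and reduce the off-axis count to a standard quadratic-character sum, using throughout that $\gcd(r,n)=1$ forces $\gcd(3^r-1,q-1)=2$ and that $n$ odd forces $\eta(-1)=-1$. The difference is in the choice of substitution. The paper sets $x=wy$, obtaining an equation of the form $y^{2-3^r-3^{2r}} = -w^{3^r-1}(w^{3^{2r}-3^r}+1)$; since the $y$-exponent has $\gcd$ equal to $2$ with $q-1$, this is equivalent to counting $(v,w)$ with $v^2$ equal to the right-hand side, which a single application of $\eta$ turns into $q - \sum_{u\in F}\eta(u^2+1)=q+1$. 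You instead pass symmetrically to $s=x^{3^r-1}$, $t=y^{3^r-1}$, obtaining the pleasantly symmetric equation $st(s+t)^{3^r}=-1$; the price is that both $s$ and $t$ must be squares, so you need the indicator expansion and end up evaluating three sums ($M$, $S_1$, $S_3$) rather than one. Your route highlights the symmetry in $x$ and $y$ and the role of the Frobenius additivity more explicitly, while the paper's route is a step shorter since it only has to impose a quadratic condition on one variable.
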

\begin{proof}
From the expression for $K$ in \reflem{Konrad}, it is clear that all $(x,0)$ and $(0,y) \in F^2$ lie in $K$, thus accounting for $2 q-1$ points.
So it remains to show that there are $q+1$ points $(x,y) \in K$ with $x,y\not=0$, and we reparameterize the condition in \reflem{Konrad} using $x=w y$ to obtain
\[
(\bbw+\bw)(\bby \by) + w y^2=0,
\]
and so we want to show that $q+1$ points $(w,y)$ with $w,y\not=0$ satisfy this equation, or equivalently, we want to show that
\[
S = \{(w,y) \in (\Fu)^2 : y^{2-3^r-3^{2 r}} = -w^{3^r-1} (w^{3^{2 r}-3^r}+1)\},
\]
has $q+1$ elements.
Note that $\gcd(2-3^r-3^{2 r},q-1)=\gcd((1-3^r)(2+3^r),3^n-1)=\gcd((3^r-1) d,3^n-1)$, and recall that $d$ is coprime to $3^n-1$, so that our greatest common divisor is $3^{\gcd(r,n)}-1=2$.
Thus $\card{S}=\card{T}$, where
\[
T=\{(v,w) \in (\Fu)^2: v^2 = -w^{3^r-1} (w^{3^{2 r}-3^r}+1)\},
\]
so it suffices to show that $\card{T}=q+1$.
Note that $w^{3^{2 r}-3^r}+1$ is never $0$, because this would imply that $-1$ is a quadratic residue in $F$, which it is not, since $[F:\F_3]=n$ is odd.
We can now compute $\card{T}$ using the quadratic character $\eta$ of $F$.
\begin{align*}
\card{T}
& = \sum_{w \in F^*} \left(1+\eta(-w^{3^r-1}(w^{3^{2 r}-3^r}+1))\right) \\
& = (q-1) - \sum_{w \in F^*} \eta(w^{3^{2 r}-3^r}+1),
\end{align*}
and then note that $\gcd(3^{2 r}-3^r,q-1)=\gcd(3^r(3^r-1),3^n-1)=3^{\gcd(r,n)}-1=2$, so that
\begin{align*}
\card{T}
& = (q-1)-\sum_{u \in F^*} \eta(u^2+1) \\
& = q - \sum_{u \in F} \eta(u^2+1) \\
& = q + 1,
\end{align*}
where we use the well known \cite[Theorem 5.48]{Lidl-Niederreiter} 
evaluation of the last character sum.
\end{proof}
\refcor{Lawrence} and \reflem{Mary} together immediately prove Proposition \ref{Dorothy}: the fourth power moment of our Weil sum is $3 q^3$.
\section{Divisibility: General Remarks}\label{SECGEN}
It only remains to prove \refprop{Elaine}, which we repeat here for convenience.
\begin{proposition}[\refprop{Elaine}, repeated]
If $F$ is a finite field of order $q=3^n$ with $n$ odd, and $d=3^r+2$ with $4 r \equiv 1 \pmod{n}$, then $\Wfda$ is a rational integer divisible by $\sqrt{3 q}$ for each $a \in F$.
\end{proposition}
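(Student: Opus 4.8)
The plan is to prove, for each $a\in F$, both that $\Wfda$ is a rational integer and that it is divisible by $3^{(n+1)/2}=\sqrt{3q}$, noting that $n$ odd makes $(n+1)/2$ an integer. The case $a=0$ is immediate, since $\gcd(d,q-1)=1$ makes $x\mapsto x^d$ a permutation of $F$, so $\Wfd(0)=\sum_{x\in F}\psi(x)=0$. For integrality I would observe that $\Wfda$ is a sum of values of $\psi$ and hence lies in $\Z[\zeta_3]$, and that it is fixed by complex conjugation: replacing $x$ by $-x$ and using that $d$ is odd (forced by $\gcd(d,q-1)=1$ in odd characteristic) gives $\overline{\Wfda}=\sum_{x\in F}\psi(x^d-ax)=\Wfda$. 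As the real elements of $\Z[\zeta_3]$ are exactly $\Z$, the sum is a rational integer, and it remains only to establish the $3$-divisibility; this is what \refsec{SECCOMP} and \refsec{SECFREE} will accomplish by two independent routes.

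To reach the $3$-adic valuation I would expand the sum in multiplicative characters. Let $g(\chi)=\sum_{x\in\Fu}\chi(x)\psi(x)$ be the Gauss sum of a multiplicative character $\chi$, let $\chi_0$ be trivial, and write $\chi^{-d}$ for the character $x\mapsto\bar\chi(x)^d$. For $a\neq 0$, inserting the inversion formula $\psi(y)=\frac{1}{q-1}\sum_\chi g(\chi)\bar\chi(y)$ (valid on $\Fu$) into both $\psi(x^d)$ and $\psi(-ax)$ and applying orthogonality of characters yields
\[
(q-1)\,\Wfda = q + \sum_{\chi\neq\chi_0} g(\chi)\,g(\chi^{-d})\,\chi^d(-a).
\]
Each summand is an algebraic integer, $q-1$ is a $3$-adic unit, and $v_3(q)=n\ge(n+1)/2$; so by the ultrametric inequality it suffices to show that every nontrivial term satisfies $v_3\big(g(\chi)\,g(\chi^{-d})\big)\ge(n+1)/2$.

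The valuations of the Gauss sums are given by Stickelberger's theorem. Fixing the Teichm\"uller character $\omega$ of order $q-1$ and normalizing the valuation above $3$ so that $v_3(3)=1$, Stickelberger gives $v_3\big(g(\omega^{-t})\big)=s_3(t)/2$, where $s_3(t)$ is the sum of the base-$3$ digits of the representative $t\in\{0,\dots,q-2\}$ and $p-1=2$. Writing $\chi=\omega^{-t}$, one has $\chi^{-d}=\omega^{-t'}$ with $t'\equiv -dt\pmod{q-1}$; the remaining factor $\chi^d(-a)$ is a root of unity and so has valuation $0$. Hence the $\chi=\omega^{-t}$ term has valuation $(s_3(t)+s_3(t'))/2$, and the entire proposition reduces to the purely combinatorial assertion that
\[
s_3(t) + s_3\big((-dt)\bmod(q-1)\big)\ge n+1 \quad\text{for all } t\in\{1,\dots,q-2\}.
\]

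This digit-sum inequality is the heart of the argument and the step I expect to be the main obstacle. Because $q-1=\underbrace{2\cdots2}_{n}$ in base $3$, one has the reflection identity $s_3\big((q-1)-m\big)=2n-s_3(m)$ for $0<m<q-1$, which turns the inequality into the equivalent bound $s_3\big((dt)\bmod(q-1)\big)\le s_3(t)+n-1$: the claim is that multiplication by $d$ cannot inflate a base-$3$ digit sum by more than $n-1$. I would attack this through the base-$3$ addition $dt=3^r t+2t$, in which multiplication by $3^r$ acts as a cyclic shift of the $n$-digit string of $t$ modulo $q-1$, so that the task becomes bounding the carries generated when a shifted copy of $t$ is added to $2t$. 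The hypothesis $4r\equiv 1\pmod{n}$ enters exactly here, since it makes the shift by $r$ satisfy $(\text{shift by }r)^4=\text{shift by }1$, pinning down which digit positions can interact and keeping the carry pattern tractable. As the inequality must hold uniformly in $n$, no finite check alone can suffice: I expect the computer-assisted route (\refsec{SECCOMP}) to use this shift structure to collapse the family to a bounded verification, while the computer-free route (\refsec{SECFREE}) performs the carry bookkeeping directly. Either way, the delicate point is that the bound is tight---there must be extremal $t$ giving equality, consistent with $\Wfd$ actually attaining $\pm 3^{(n+1)/2}$---so the analysis has no slack to spare.
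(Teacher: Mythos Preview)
Your outline is correct and matches the paper's approach essentially point for point: integrality from $d$ being odd (you argue it directly via $x\mapsto -x$ where the paper cites Helleseth's criterion $d\equiv 1\pmod{p-1}$), then the Gauss-sum expansion combined with Stickelberger to reduce everything to the digit-sum inequality $\wt(t)+\wt(-dt)\ge n+1$ for nonzero $t\in\Ztn$, which is exactly the paper's \eqref{Matilda}. You correctly flag that inequality as the real content and correctly anticipate that the reparameterization $4r\equiv 1\pmod n$ collapses the carry analysis to a finite automaton (the paper's \refsec{SECCOMP}) or a direct case analysis (the paper's \refsec{SECFREE}); the only thing missing from your proposal is that actual combinatorial verification, which is indeed where all the work lies.
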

The fact that $\Wfda \in \Z$ for every $a \in F$ follows immediately from a result of Helleseth \cite[Theorem 4.2]{Helleseth}.
\begin{theorem}[Helleseth, 1976]
If $F$ is a finite field of characteristic $p$, then $\Wfda \in \Z$ for all $a \in F$ if and only if $d \equiv 1 \pmod{p-1}$.
\end{theorem}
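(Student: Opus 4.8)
The plan is to exploit the Galois theory of the cyclotomic field in which the Weil sums live. Writing $\zeta = \exp(2\pi i/p)$, each value $\Wfda = \sum_{x \in F} \zeta^{\Tr(x^d - a x)}$ is a $\Z$-linear combination of powers of $\zeta$, hence an algebraic integer of $\Q(\zeta)$. Since $\Z$ is precisely the set of rational algebraic integers, and $\Q$ is the fixed field of $\mathrm{Gal}(\Q(\zeta)/\Q) \cong \Fp^\times$, we have $\Wfda \in \Z$ if and only if $\sigma_c(\Wfda) = \Wfda$ for every $c \in \Fp^\times$, where $\sigma_c$ denotes the automorphism determined by $\zeta \mapsto \zeta^c$. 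The whole statement therefore reduces to understanding how the Galois group permutes the values $\Wfda$.

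The key step is to evaluate $\sigma_c(\Wfda)$ explicitly. Because $\Tr$ is $\Fp$-linear and $c \in \Fp$, one has $\sigma_c(\zeta^{\Tr(t)}) = \zeta^{c \Tr(t)} = \zeta^{\Tr(c t)}$, so $\sigma_c(\Wfda) = \sum_{x \in F} \zeta^{\Tr(c x^d - c a x)}$. I would then substitute $x = c^s y$, where $s$ is chosen so that $d s \equiv -1 \pmod{p-1}$; such an $s$ exists since $\gcd(d,q-1)=1$ forces $\gcd(d,p-1)=1$ (because $p-1 \mid q-1$), and $x \mapsto c^s y$ is a bijection of $F$ as $c^s \in \Fp^\times$. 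The element $c^s$ has order dividing $p-1$, so the leading term becomes $c\, c^{s d} y^d = c^{1+sd} y^d = y^d$, while the linear term becomes $c^{1+s} a y$. This gives the transparent formula $\sigma_c(\Wfda) = \Wfd(c^m a)$ with $m \equiv 1 + s \equiv 1 - d^{-1} \pmod{p-1}$, where $d^{-1}$ is the inverse of $d$ modulo $p-1$. In other words the Galois group acts on the family of Weil sums merely by rescaling the argument $a$ by the scalar $c^m \in \Fp^\times$. The forward implication is then immediate: if $d \equiv 1 \pmod{p-1}$ then $m \equiv 0$, so $c^m = 1$ and $\sigma_c(\Wfda) = \Wfda$ for all $c$ and all $a$; every value is Galois-fixed and hence a rational integer.

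For the converse I would argue by contraposition. Suppose $d \not\equiv 1 \pmod{p-1}$, so that $m \not\equiv 0 \pmod{p-1}$, and let $g$ generate $\Fp^\times$; then $h = g^m$ has order $e = (p-1)/\gcd(m,p-1) \geq 2$. If every $\Wfda$ were a rational integer, then taking $c = g$ in the formula above would give $\Wfd(h a) = \Wfda$ for all $a$, so $\Wfd$ would be constant on the orbits of the multiplication-by-$h$ action of $\langle h \rangle$ on $\Fu$. These orbits are the cosets of $\langle h \rangle$, all of size $e$, so summing over $\Fu$ and invoking the first power moment $\sum_{a \in \Fu} \Wfda = q$ from \eqref{Albert} would yield $e \mid q$. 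But $e \geq 2$ divides $p-1$, whence $\gcd(e,q) = \gcd(e,p^n) = 1$, a contradiction. Thus some $\Wfda$ fails to lie in $\Z$, which completes the equivalence.

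I expect the substitution step to be the main obstacle, or at least the place requiring the most care: one must check that replacing $x$ by $c^s y$ legitimately rewrites the Galois conjugate $\sigma_c(\Wfda)$ as a bona fide Weil sum $\Wfd(c^m a)$, since it is exactly this normalization that converts an inscrutable action on cyclotomic integers into the elementary scaling $a \mapsto c^m a$. Once that formula is secured, both directions reduce to the divisibility bookkeeping carried out above.
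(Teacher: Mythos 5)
Your proof is correct, but there is nothing in the paper to compare it against: the paper does not prove this statement at all, it simply quotes it as \cite[Theorem 4.2]{Helleseth} and uses it as a black box. What you have written is essentially the standard (and Helleseth's original) Galois-theoretic argument, made fully self-contained. Your key conjugation formula $\sigma_c(\Wfda)=\Wfd(c^m a)$ with $m \equiv 1-d^{-1} \pmod{p-1}$ is exactly the cyclotomic action on Weil spectra that is studied systematically in the cited paper of Aubry, Katz, and Langevin, so your normalization step is not only legitimate but well precedented. Checking the details: the substitution $x=c^s y$ is a bijection since $c^s \in \grmul{\Fp}$; the choice $sd \equiv -1 \pmod{p-1}$ exists because the paper's standing hypothesis $\gcd(d,\cFu)=1$ together with $p-1 \mid q-1$ gives $\gcd(d,p-1)=1$; and $c^{1+sd}=1$ because $c$ has order dividing $p-1$. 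The forward direction ($m\equiv 0$ forces every value to be a Galois-fixed algebraic integer, hence in $\Z$) is immediate, and your converse is sound: if all values were rational integers, invariance under $a \mapsto ha$ with $h$ of order $e \geq 2$ dividing $p-1$ would partition $\Fu$ into free orbits of size $e$, and the first power moment \eqref{Albert} would force $e \mid q = p^n$, impossible since $\gcd(e,p)=1$. (Note that \eqref{Albert} itself uses $\Wfd(0)=0$, which again relies on $\gcd(d,q-1)=1$, so the standing hypothesis is genuinely needed in both directions, as you correctly anticipated.) The only quibble is that you should state explicitly that $m \equiv 0 \pmod{p-1}$ if and only if $d \equiv 1 \pmod{p-1}$ — this is the one-line observation that makes the two directions exactly complementary — but the equivalence is trivial since inversion modulo $p-1$ fixes $1$.
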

To prove the result on divisibility, we use a well known technique that relies on Stickelberger's Theorem (or alternatively, one can use McEliece's Theorem).
To state the principle, we use the $p$-adic valuation, written $\valp$, for a prime $p \in \Z$, and we extend $\valp$ to $\Q(e^{2\pi i/p})$ so that $\valp(e^{2\pi i/p}-1)=1/(p-1)$. 
Also, for $b$ and $n$ positive integers, we use the $b$-ary weight function $\wtbn\colon \Zbn \to \Z$, which computes the sum of the digits in the $b$-ary expansion of an $a \in \Zbn$.
That is, if we write an element $a \in \Zbn$ as $a=\sum_{i \in \Zn} a_i b^i$ with the elements $b^i$ in the group $\Zbn$ and each coefficient $a_i \in \{0,1,\ldots,b-1\}\subseteq \Z$ with at least one $a_i < b-1$, then $\wtbn(a)=\sum_{i \in \Zn} a_i$.
\begin{proposition}\label{Anthony}
Let $F$ be of characteristic $p$ and order $p^n$, and let
\[
m=\mins{j \in \Zpn \\ j\not=0} \wtpn(j)+\wtpn(-d j),
\]
or equivalently,
\[
m=(p-1) n + \mins{j \in \Zpn \\ j\not=0} \wtpn(d j)-\wtpn(j).
\]
Then $\valp(\Wfda) \geq m/(p-1)$ for all $a \in F$, with equality for some $a \in F$.
\end{proposition}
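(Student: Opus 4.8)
The plan is to write the Weil sum as a linear combination of Gauss sums and then read off the $p$-adic valuation of each term from Stickelberger's Theorem. Write $q=p^n$, let $\omega$ be the Teichm\"uller character of $\Fu$ (so every multiplicative character is a power of $\omega$), and for a multiplicative character $\chi$ set $g(\chi)=\sum_{x\in\Fu}\chi(x)\psi(x)$. First I would split off the term $x=0$ (contributing $\psi(0)=1$) and, on the remaining sum over $\Fu$, replace $\psi(x^d)$ by its Fourier expansion $\psi(x^d)=\tfrac{1}{q-1}\sum_{\chi}\overline{\chi}(x^d)g(\chi)$. Collapsing the inner sum over $x\in\Fu$ (a substitution turns it into another Gauss sum when $a\neq0$) yields, for every $a$, the multiplicative Fourier expansion
\[
\Wfda \;=\; \frac{1}{q-1}\sums{\chi} C_{\chi}\,\chi(a),
\]
where the coefficient attached to $\chi=\omega^{-jd}$ with $j\not\equiv 0$ is $C_{\chi}=\omega^{jd}(-1)\,g(\omega^{-j})\,g(\omega^{jd})$ and the coefficient of the trivial character is $q$ (consistent with the known identity $\sum_{a}\Wfda=q$). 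Since $\gcd(d,q-1)=1$, the map $j\mapsto jd$ permutes $\Zpn$, so these are genuinely all the characters, each occurring once.

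By Stickelberger's Theorem in the normalization of this section, $\valp(g(\omega^{-k}))=\wtpn(k)/(p-1)$. Since $1/(q-1)$ and the character values at nonzero arguments are roots of unity, hence $p$-adic units, this gives $\valp(C_{\omega^{-jd}})=\bigl(\wtpn(j)+\wtpn(-dj)\bigr)/(p-1)$ for $j\neq 0$, while $\valp(q)=n$. The ultrametric inequality applied to the displayed expansion then gives $\valp(\Wfda)\ge \min_{\chi}\valp(C_{\chi})$ for every $a$. The minimum over nontrivial $\chi$ is exactly $m/(p-1)$ by the definition of $m$, and the trivial character satisfies $n\ge m/(p-1)$, i.e.\ $m\le(p-1)n$ (seen by taking $j$ with $dj\equiv 1\pmod{q-1}$ in the second formula for $m$, so that $\wtpn(dj)=1\le \wtpn(j)$). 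This establishes $\valp(\Wfda)\ge m/(p-1)$ for all $a$, including $a=0$ trivially, where $\Wfd(0)=0$.

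For sharpness — that the bound is attained — I would invert: by orthogonality $C_{\chi}=\sum_{b\in\Fu}\Wfd(b)\,\overline{\chi}(b)$, so the ultrametric inequality gives $\valp(C_{\chi})\ge \min_{b\in\Fu}\valp(\Wfd(b))$. Choosing a character $\chi$ with $\valp(C_{\chi})=m/(p-1)$ (one exists, and $C_{\chi}\neq0$ because Gauss sums never vanish), we obtain $m/(p-1)=\valp(C_{\chi})\ge \min_{b\in\Fu}\valp(\Wfd(b))$, which together with the lower bound forces equality $\valp(\Wfd(b))=m/(p-1)$ for some $b$. This avoids any direct non-cancellation argument for the minimal-valuation terms.

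Finally, the equivalence of the two expressions for $m$ follows from the borrow-free digit identity $\wtpn(-k)=(p-1)n-\wtpn(k)$ for $k\not\equiv0$, valid because $p^n-1$ has all its $p$-ary digits equal to $p-1$. Writing $h(j)=\wtpn(dj)-\wtpn(j)$, this identity gives $\wtpn(j)+\wtpn(-dj)=(p-1)n-h(j)$ and, using $\gcd(d,q-1)=1$, also $h(-j)=-h(j)$; hence $j\mapsto -j$ shows the value set of $h$ is symmetric about $0$, so the two minima agree. I expect the main obstacle to be purely bookkeeping: pinning down the exact Stickelberger normalization so that the coefficient valuation comes out to $\wtpn(j)+\wtpn(-dj)$ rather than a variant, and correctly tracking the trivial-character (that is, $j=0$) term through the expansion. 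The conceptual content is short, since both the lower bound and its sharpness reduce to the ultrametric inequality once the Fourier expansion is in hand.
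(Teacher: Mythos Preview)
Your proof is correct and follows essentially the same approach as the paper: expand $\Wfda$ multiplicatively as a combination of products of two Gauss sums, apply Stickelberger to each term, and use the ultrametric inequality in both directions to get the lower bound and its sharpness. The only difference is that the paper outsources the equality $\min_{a\in\Fu}\valp(\Wfda)=\min_{\chi\neq 1}\valp(\tau(\chi)\tau(\bar\chi^d))$ to \cite[Lemma~4.1]{Aubry-Katz-Langevin}, whereas you reprove it directly via the Fourier expansion and the dual ultrametric argument on $C_\chi=\sum_b \Wfd(b)\bar\chi(b)$.
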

\begin{proof}
The equivalence of the two definitions of $m$ comes from reparameterizing with $-j$ for $j$ in the minimization, and using the fact that if a nonzero $j \in \Zpn$ has $p$-ary expansion $\sum_{i \in \Zn} j_i p^i$, then the element $-j$ has $p$-ary expansion $\sum_{i \in \Zn} (p-1-j_i) p^i$, so that $\wtpn(-j)=(p-1) n - \wtpn(j)$.

Lemma 4.1 of \cite{Aubry-Katz-Langevin} tells us that
\begin{equation}\label{Gerald}
\min_{a \in F^*} \valp(\Wfda) =  \mins{\chi\in \widehat{F^*} \\ \chi\not=1} \valp(\tau(\chi) \tau(\bar{\chi}^d)),
\end{equation}
where $\widehat{F^*}$ is the group of multiplicative characters of $F$, with the principal character denoted by $1$, and for any $\chi \in \widehat{F^*}$, we have the Gauss sum
\[
\tau(\chi)=\sum_{a \in F^*} \psi(a) \chi(a).
\]
If we let $\omega \colon F^* \to \C$ be the Teichm\"uller character, then Stickelberger's Theorem \cite[Theorem 2.1]{Lang} tells us that for $j \in \Zpn$, we have $\valp(\tau(\omega^j))=\wtpn(-j)/(p-1)$.
Thus, if we express the nontrivial multiplicative characters of $F$ as powers of the Teichm\"uller character, i.e., $\chi=\omega^{-j}$ for $j \in \Zpn$ with $j\not=0$, then equation \eqref{Gerald} becomes $\min_{a \in F^*} \valp(\Wfda) =  m/(p-1)$,
which is the desired result on the $p$-adic valuation of $\Wfda$.
\end{proof}
Given $j \in \Zpn$, we use a modular add-and-carry method inspired by \cite{HX} to help compute the weights of $-d j$ and $d j$ that appear in the formulae in \refprop{Anthony}.
The basic result we need is a technical result related to \cite[Lemma 3]{HX}.
\begin{lemma}\label{Alan}
Let $b$ and $n$ be positive integers with $b > 1$.
Suppose that we have $s_i, t_i \in \Z$ for every $i \in \Zn$, such that $\sum_{i \in \Zn} s_i b^i \equiv \sum_{i \in \Zn} t_i b^i \pmod{b^n-1}$.
Then there is a unique collection of integers $\{c_i\}_{i \in \Zn}$ such that
\begin{equation}\label{Olivia}
s_i + c_{i-1} = t_i + b c_i,
\end{equation}
for all $i \in \Zn$: these are in fact
\begin{equation}\label{Harold}
c_i = \frac{1}{b^n-1} \sum_{j=0}^{n-1} (s_{j+i+1}-t_{j+i+1}) b^j,
\end{equation}
for $i \in \Zn$.
Furthermore
\begin{equation}\label{Eveline}
\sum_{i \in \Zn} c_i = \frac{1}{b-1} \sum_{i \in \Zn} (s_i-t_i).
\end{equation}
\end{lemma}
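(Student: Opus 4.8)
The plan is to treat \eqref{Olivia} as a cyclic linear system in the unknowns $\{c_i\}$ and solve it directly, then verify that the solution has the claimed closed form \eqref{Harold}, is integral, and satisfies the digit-sum identity \eqref{Eveline}. Throughout I would write $e_i = s_i - t_i$, so that the recurrence \eqref{Olivia} reads $c_{i-1} - b c_i = -e_i$ and the hypothesis becomes $\sum_{i \in \Zn} e_i b^i \equiv 0 \pmod{b^n-1}$.

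First I would dispose of uniqueness, which is purely formal and uses nothing about the congruence hypothesis. If $\{c_i\}$ and $\{c_i'\}$ both satisfy \eqref{Olivia}, then the difference $\delta_i = c_i - c_i'$ satisfies the homogeneous relation $\delta_{i-1} = b \delta_i$, that is, $\delta_i = b \delta_{i+1}$. Iterating this around the cycle and using that indices live in $\Zn$ gives $\delta_i = b^n \delta_i$, hence $(b^n-1)\delta_i = 0$; since $b > 1$ this forces $\delta_i = 0$ for every $i$. Thus at most one solution exists, and it suffices to exhibit one.

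Next I would verify that the formula \eqref{Harold} actually solves \eqref{Olivia}. This is a telescoping computation: expanding $c_{i-1} - b c_i$ from \eqref{Harold}, the two resulting sums agree in all but their boundary terms, which combine (using $e_{i+n} = e_i$) to $\frac{e_i(1 - b^n)}{b^n-1} = -e_i$, exactly as needed. Summing the now-established recurrence over all $i \in \Zn$ then yields \eqref{Eveline} immediately: the carries telescope because $\sum_{i \in \Zn} c_{i-1} = \sum_{i \in \Zn} c_i$, leaving $(b-1)\sum_{i \in \Zn} c_i = \sum_{i \in \Zn}(s_i - t_i)$.

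The one genuinely substantive point, and the place where the congruence hypothesis is indispensable, is showing that the numbers defined by \eqref{Harold} are integers, i.e. that $b^n-1$ divides $\sum_{j=0}^{n-1} e_{j+i+1} b^j$ for each $i$. The hypothesis supplies this only for the unshifted sum $\sum_{j} e_j b^j$, so the key auxiliary claim I must prove is that the congruence $\sum_j e_j b^j \equiv 0 \pmod{b^n-1}$ is invariant under cyclic shifts of the index. I expect to establish this by multiplying the shifted sum $\sum_{j=0}^{n-1} e_{j+m} b^j$ by $b^m$: the exponents $j+m$ then run over a complete residue system modulo $n$, so reducing modulo $b^n-1$ (where $b^n \equiv 1$) merely rearranges the terms back into $\sum_{l=0}^{n-1} e_l b^l \equiv 0$, and since $\gcd(b, b^n-1) = 1$ the factor $b^m$ may be cancelled. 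Taking $m = i+1$ yields the divisibility, completing the proof. I anticipate that this cyclic-invariance step is the main obstacle, since everything else reduces to formal manipulation of the recurrence.
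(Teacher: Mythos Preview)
Your proof is correct and follows essentially the same route as the paper: both verify that \eqref{Harold} satisfies the recurrence by a telescoping computation, both establish integrality via the cyclic-shift argument (the paper phrases it as the shifted sum being $b^{-(i+1)}$ times the original modulo $b^n-1$, which is your argument in contrapositive form), and both obtain \eqref{Eveline} by summing \eqref{Olivia}. The only minor difference is that the paper proves uniqueness by deriving \eqref{Harold} directly from \eqref{Olivia} (multiply the shifted recurrence by $b^j$ and sum), whereas you argue via the homogeneous system; both are equally valid and routine.
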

\begin{proof}
The $c_i$ defined in \eqref{Harold} are indeed integers, because the sum is congruent modulo $b^n-1$ to $b^{-(i+1)} \sum_{j \in \Zn} (s_j-t_j) b^j$, which vanishes modulo $b^n-1$ by assumption.
Replace $i$ in \eqref{Olivia} with $j+i+1$, multiply both sides by $b^j$, and then sum this for $j$ from $0$ to $n-1$.
Then rearrange and divide by $b^n-1$ to obtain \eqref{Harold}.
Conversely, replace $i$ with $i-1$ in \eqref{Harold}, and subtract from this $b$ times \eqref{Harold} (with $i$ unchanged) to obtain \eqref{Olivia}.
Finally, sum \eqref{Olivia} for all $i \in \Zn$, rearrange, and divide by $b-1$ to obtain \eqref{Eveline}.
\end{proof}

For the rest of this paper, we assume that $n$ is odd and $d=2+3^r$ where $4 r \equiv 1 \pmod{n}$, and we write $\wt$ for $\wttn$.
By \refprop{Anthony}, we will complete our proof of \refprop{Elaine} if we show that
\begin{equation}\label{Matilda}
\wt(x) +\wt(-d x) \geq n+1,
\end{equation}
or, equivalently, show that
\begin{equation}\label{Nancy}
n + \wt(d x)-\wt(x) > 0,
\end{equation}
for all nonzero $x \in \Ztn$.
Our computer-assisted proof in Section \ref{SECCOMP} verifies \eqref{Nancy}, while our computer-free proof in Section \ref{SECFREE} proves \eqref{Matilda}.
\section{Computer-Assisted Proof of Divisibility}
\label{SECCOMP}
In this section, we use a graph-theoretic formulation as in \cite{LL} to provide a computational verification of \eqref{Nancy} (which then secures \refprop{Elaine}) by means of the algorithms of Tarjan and Bellman-Ford.
We continue to assume that $n$ is odd, that $d=2+3^r$ with $4 r \equiv 1 \pmod{n}$, and we use $\wt(a)$ to denote the sum of the digits in the ternary expansion of $a \in \Ztn$.  (Thus $\wt(a)$ here is $\wttn(a)$ per the definition given just before \refprop{Anthony}.)

To verify \eqref{Nancy}, we let $x$ be a given nonzero element of $\Ztn$, and set $y=d x$, and then our goal is to show
\begin{equation}\label{Natasha}
n+\wt(y)-\wt(x) > 0.
\end{equation}

For each $i \in \Zn$, we let $x_i, y_i \in \{0,1,2\} \subseteq \Z$ such that $x=\sum_{i \in \Zn} x_i 3^i$ and $y=\sum_{i \in \Zn} y_i 3^i$.
Since $y= d x$ with $d=2+3^r$, we can also write $y=\sum_{i \in \Zn} (2 x_i+x_{i-r}) 3^i$.
Then by \reflem{Alan}, there are integers $c_i$ for $i \in \Zn$ such that
\begin{equation}\label{BASIC}
y_i + 3 c_i = 2 x_i+ x_{i-r} + c_{i-1}
\end{equation}
for every $i \in \Zn$.

We now set $X_i=x_{r i}$, $Y_i=y_{r i}$, and $C_i=c_{r i}$ for each $i \in \Zn$, and use the fact that $4 r \equiv 1 \pmod{n}$ to reparameterize \eqref{BASIC} with $i=r j$ to obtain
\begin{equation}\label{CARRY}
Y_j + 3 C_j = 2 X_j  + X_{j-1} + C_{j-4}.
\end{equation}
Note that $r$ no longer explicitly appears in our formula.
Since $Y_j \in \{0,1,2\}$ for every $j$, we see that
\begin{equation}\label{Roland}
C_j = \floor{\frac{2 X_j+X_{j-1}+C_{j-4}}{3}}.
\end{equation}

We sum \eqref{CARRY} over all $j \in \Zn$ to obtain
\begin{equation}\label{Henry}
\sum_{j \in \Zn} Y_j + 2 \sum_{j \in \Zn} C_j = 3 \sum_{j \in \Zn} X_j,
\end{equation}
and then note that $\sum_{j \in \Zn} X_j = \sum_{i \in \Zn} x_i = \wt(x)$ and $\sum_{j \in \Zn} Y_j=\wt(y)$ to get
\[
\sum_{j \in \Zn} C_j = \frac{3 \wt(x)-\wt(y)}{2}.
\]
Since $0 \leq \wt(x), \wt(y) < 2 n$, we see that there are $k,\ell \in \Zn$ with $C_k \geq 0$ and $C_\ell \leq 2$. 
Then one can use \eqref{Roland} and the fact that $X_i \in \{0,1,2\}$ for all $i \in \Zn$ to see that $C_{k+4} \geq 0$ and $C_{\ell+4} \leq 2$.
Continuing in this fashion (and recalling that $4 r \equiv 1 \pmod{n}$), we see that $C_j \in \{0,1,2\}$ for every $j \in \Zn$.

We again note that $\wt(x)=\sum_{j \in \Zn} X_j$ and $\wt(y)=\sum_{j \in \Zn} Y_j$, and employ \eqref{Henry} to see that \eqref{Natasha} (which is our goal) is equivalent to
\begin{equation}\label{Hubert}
\sum_{j \in \Zn} (1+2(X_j-C_j)) \geq 0,
\end{equation}
where the strict inequality has been replaced by a non-strict one inasmuch as the left hand side is always odd (since $n$ is odd).
So now our goal is to prove \eqref{Hubert}.

We consider a directed graph with $3^6$ vertices,
\[
(\xi,\gamma) = (\xi_0,\xi_1,\gamma_0,\gamma_1,\gamma_2,\gamma_3)\in\{0,1,2\}^6,
\]
with an edge $(\xi,\gamma)\to (\xi',\gamma')$ if and only if 
\begin{align*}
\xi'_0 & = \xi_1, \\ 
\gamma'_0 & = \gamma_1, \quad \gamma'_1=\gamma_2, \quad \gamma'_2=\gamma_3, \quad\text{and} \\
\gamma'_3 &= \floor{\frac{\xi_0 + 2\xi_1 +\gamma_0}{3}}.
\end{align*}
(Note that there is no constraint on $\xi'_1$.)
If we write the sextuple $T_j=(X_{j-1},X_j,C_{j-4},C_{j-3},C_{j-2},C_{j-1})$ for each $j \in \Zn$, then the sequence $T_0,T_1,\ldots,T_{n-1},T_0$ traces a directed cycle of length $n$ in our directed graph: the first four conditions for an edge are immediately satisfied by the structure of $T_j$ and $T_{j+1}$, while the last condition is satisfied because of \eqref{Roland}.
Furthermore, if we attach to each directed edge a cost
\[
\kappa((\xi,\gamma),(\xi',\gamma')) = 1 + 2 (\xi_1-\gamma_0),
\]
then the total cost for our directed cycle is equal to $\sum_{j \in \Zn} (1+2(X_j-C_j))$.
Thus to verify \eqref{Hubert} (which secures \refprop{Elaine}), it suffices to show that the graph does not contain any {\it absorbent circuit}, that is, a circuit of strictly negative cost.

The graph is of order $729$, with $2187$ edges. We apply  
Tarjan's algorithm  to split the graph into $258$ strongly 
connected components. All of them are trivial 
(singleton without circuit) 
except two components: a large one of size $471$, 
and the following of order $2$
\[
[0,2,2,0,2,0] \buildrel {\bf 0}\over\longleftrightarrow [2,0,0,2,0,2]
\]
On the large component, we apply the Bellman-Ford algorithm
to  prove the non-existence of an absorbent circuit.
Note that the running time is negligible.

\section{Computer-Free Proof of Divisibility}
\label{SECFREE}
In this section, we provide a proof of \eqref{Matilda} (which then secures \refprop{Elaine}) that does not use a computer.

We continue to assume that $n$ is odd, that $d=2+3^r$ with $4 r \equiv 1 \pmod{n}$, and we use $\wt(a)$ to denote the sum of the digits in the ternary expansion of $a \in \Ztn$.  (Thus $\wt(a)$ here is $\wttn(a)$ per the definition given just before \refprop{Anthony}.)

To verify \eqref{Matilda}, we let $x$ be a nonzero element of $\Ztn$ that makes $\wt(x)+\wt(-d x)$ as small as possible, and furthermore, among such minimizers, we choose an $x$ with $\wt(x)$ as small as possible.
We set $z=-d x$, and then our goal is to show
\begin{equation}\label{Yvette}
\wt(x)+\wt(z) \geq n+1.
\end{equation}
\begin{step}[Upper Limit on $\wt(x)+\wt(z)$]\label{Laura}
Let $a=1+3^{2 r}+3^{4 r} + \cdots + 3^{(n-3)r} \in \Ztn$, which has $\wt(a)=(n-1)/2$, and then note that $-da = 3^r+3^{3 r} + 3^{5 r} + \cdots + 3^{(n-2) r} + 2 \cdot 3^{(n-1) r}$ has $\wt(-d a)=(n+3)/2$, so that $\wt(a)+\wt(-d a)=n+1$.
Thus $\wt(x)+\wt(z) \leq n+1$, since we chose $x$ to minimize this sum of weights.
\end{step}
\begin{step}[The Five Surgeries]
A {\it surgery} is a modification to our $x$ that would change it to an $x'$ such that if $z'=-d x'$, then $\wt(x')+\wt(z') \leq \wt(x)+\wt(z)$, and yet $\wt(x') < \wt(x)$, thus contradicting our choice of $x$.
So our $x$ must not be susceptible to any surgeries.
In order to describe our surgeries, for each $i \in \Zn$, we let $x_i, z_i \in \{0,1,2\} \subseteq \Z$ such that $x=\sum_{i \in \Zn} x_i 3^i$ and $z=\sum_{i \in \Zn} z_i 3^i$.
We list five surgeries on \reftab{Sigmund}, with the conditions on $x$ and $z=-d x$ under which they can be performed: thus our $x$ and $z$ must not satisfy any of these conditions.
\begin{table}[ht]\caption{The Surgeries}\label{Sigmund}
\begin{center}
\begin{tabular}{c|c|c}
Surgery & Conditions Needed & Values of $x'$ and $z'$ \\
Number & (for any $i \in \Zn$) & \\ 
\hline
\hline
I & $x_i \geq 1$ & $x'=x-3^i$ \\
  & $z_i \geq 1$ & $z'=z-3^i+3^{i+1}+3^{i+r}$ \\
\hline
II & $x_i=2$ & $x'=x-2\cdot 3^i$ \\
   & $z_{i+r} \geq 1$ & $z'=z+3^i+3^{i+1}-3^{i+r}+3^{i+r+1}$ \\
\hline
III & $x_i=2$ & $x'=x-2\cdot 3^i+3^{i+r}$ \\
    & $z_{i+2 r} \geq 1$ & $z'=z+3^i+3^{i+1}-3^{i+2 r}$ \\
\hline
IV & $x_i=2$ & $x'=x-2\cdot 3^i+3^{i+r}-2\cdot 3^{i+2 r}+3^{i+3 r}$ \\
   & $x_{i+2 r}=2$ & $z'=z+3^i+3^{i+2 r+1}$ \\
\hline
V  & $x_i=2$ & $x'=x-2\cdot 3^i+3^{i+r}-3^{i+2 r}+3^{i+3 r}$ \\
   & $x_{i+2 r}\geq 1$ & $z'=z+3^i+3^{i+2 r}-3^{i+3 r}$ \\
   & $z_{i+3 r} \geq 1$ & 
\end{tabular}
\end{center}
\end{table}

We check $z'=-d x'$ in each proposed surgery: since $z=-d x$ and $d=2+3^r$, this amounts to checking that $z'-z=(2+3^r)(x-x')$.
For the five items in our table, we therefore check that
\begin{align*}
-3^i+3^{i+1}+3^{i+r} & = (2+3^r) 3^i, \\
3^i+3^{i+1}-3^{i+r}+3^{i+r+1} & = (2+3^r)2\cdot 3^i, \\
3^i+3^{i+1}-3^{i+2r} & = (2+3^r) (2\cdot 3^i-3^{i+r}), \\
3^i+3^{i+2r+1} & = (2+3^r) (2\cdot 3^i-3^{i+r}+2\cdot 3^{i+2r}-3^{i+3r}), \\
3^i+3^{i+2r}-3^{i+3r} & = (2+3^r) (2\cdot 3^i-3^{i+r}+3^{i+2r}-3^{i+3r}),
\end{align*}
the first three of which are easily verified to be true, and the last two are also readily verified once one recalls that $4 r \equiv 1 \pmod{n}$.

We now consider $\wt(x')$ and $\wt(z')$ for our surgeries.
On the one hand, if $x_i \geq a$, then $\wt(x-a\cdot 3^i)=\wt(x)-a$.
On the other hand, for any $j$, we have $\wt(x+3^j)\leq \wt(x)+1$, for the ternary expansion of $x+3^j$ is obtained by finding the position $k \in \Zn$ such that $x_j=\cdots=x_{k-1}=2$ and $x_k < 2$, and then replacing each of $x_j,\ldots,x_{k-1}$ with $0$, and replacing $x_k$ with $x_k+1$.
We apply these two principles to the estimation of $\wt(x')$ and $\wt(z')$, to obtain, respectively, for the five items in our table
\begin{align*}
\wt(x')=\wt(x)-1, \quad & \quad \wt(z')\leq\wt(z)+1, \\
\wt(x')=\wt(x)-2, \quad & \quad \wt(z')\leq\wt(z)+2, \\
\wt(x')\leq\wt(x)-1, \quad & \quad \wt(z')\leq\wt(z)+1, \\
\wt(x')\leq\wt(x)-2, \quad & \quad \wt(z')\leq\wt(z)+2, \\
\wt(x')\leq\wt(x)-1, \quad & \quad \wt(z')\leq\wt(z)+1,
\end{align*}
so that $\wt(x')+\wt(z')\leq\wt(x)+\wt(z)$ and $\wt(x') < \wt(x)$ in each case.
This completes the verification that the items in our table are indeed surgeries, and therefore our $x$ and $z$ cannot fulfil any set of conditions that would allow one of these surgeries to be performed.
\end{step}
\begin{step}[The Carry Sequence]
Since $d=2+3^r$, $x=\sum_{i\in\Zn} x_i 3^i$, and $z=\sum_{i\in\Zn} z_i 3^i$, we can write $d x+z=\sum_{i \in \Zn} (2 x_i+x_{i-r}+z_i) 3^i$.
Because $z=-d x$, we see that $d x + z = 0 = \sum_{i \in\Zn} 2 \cdot 3^i$, and so \reflem{Alan} shows us that there are integers $c_i$ for $i \in \Zn$ such that
\begin{equation}\label{Robert}
2 + 3 c_i = 2 x_i+ x_{i-r} + z_i + c_{i-1},
\end{equation}
for every $i \in \Zn$.
We call $\{c_i\}_{i \in \Zn}$ the {\it carry sequence} for $x$ and $z$.
\end{step}
\begin{step}[Reparameterization]
We now set $X_i=x_{r i}$, $Z_i=Z_{r i}$, and $C_i=c_{r i}$ for each $i \in \Zn$, and use the fact that $4 r \equiv 1 \pmod{n}$ to reparameterize \eqref{Robert} with $i=r j$ to obtain
\begin{equation}\label{Andrew}
2 + 3 C_j = 2 X_j  + X_{j-1} + Z_j + C_{j-4}.
\end{equation}
Note that $r$ no longer explicitly appears in our formula.

Using the reparameterization $j=r i$, we translate the conditions from \reftab{Sigmund}, which must not hold for the ternary digits $x_i$ and $z_i$ of our $x$ and $z$, into equivalent conditions that must not hold for our $X_j$ and $Z_j$, and list these forbidden conditions on \reftab{Michael} below.
\begin{table}[ht]\caption{Forbidden Conditions on $X_j$ and $Z_j$}\label{Michael}
\begin{center}
\begin{tabular}{c|c}
Surgery Number & Disallows the Condition \\
\hline
I & $X_j \geq 1$ and $Z_j \geq 1$ \\
II & $X_j=2$ and $Z_{j+1} \geq 1$ \\
III & $X_j=2$ and $Z_{j+2} \geq 1$ \\
IV & $X_j=2$ and $X_{j+2}=2$ \\
V & $X_j=2$, $X_{j+2}\geq 1$, and $Z_{j+3} \geq 1$ 
\end{tabular}
\end{center}
\end{table}
\end{step}
\begin{step}[Limitation on Carries]
We rearrange \eqref{Andrew} to obtain
\begin{equation}\label{Beatrice}
C_j = \frac{2 X_j+X_{j-1}+Z_j+C_{j-4}-2}{3}.
\end{equation}
Now note that $X_i, Z_i \in \{0,1,2\}$ for all $i \in \Zn$, and also note from \reftab{Michael} that Surgery I prevents $X_i$ and $Z_i$ from simultaneously being nonzero, so that
\begin{equation}\label{Dante}
\frac{C_{j-4}-2}{3} \leq C_j \leq \frac{C_{j-4}+4}{3}.
\end{equation}

Now sum \eqref{Andrew} over all $j \in \Zn$ to obtain
\[
2 n + 2 \sum_{j \in \Zn} C_j = 3 \sum_{j \in \Zn} X_j + \sum_{j \in \Zn} Z_j,
\]
and then note that $\sum_{j \in \Zn} X_j = \sum_{i \in \Zn} x_i = \wt(x)$ and $\sum_{j \in \Zn} Z_j=\wt(z)$ to get
\[
\sum_{j\in \Zn} C_j = \frac{3 \wt(x)+\wt(z)-2 n}{2}.
\]
Since $x\not=0$ and $d$ is coprime to $3^n-1$ (see \reffoot{Edward} in the Introduction), we know that $z=-d x\not=0$, so $\wt(x)$ and $\wt(z)$ are strictly positive.
Therefore \refstep{Laura} implies that $4 \leq 3\wt(x)+\wt(z) \leq 3 n+1$, and so 
\[
-(n-2) \leq \sum_{j\in\Zn} C_j \leq \frac{n+1}{2},
\]
so that there are $k, \ell \in \Zn$ such that $C_k \geq 0$ and $C_\ell \leq 1$.
Then \eqref{Dante} shows that $C_{k+4} \geq 0$ and $C_{\ell+4} \leq 1$.
Continuing in this fashion (and recalling that $4 r \equiv 1 \pmod{n}$), we see that $C_j \in \{0,1\}$ for all $j \in \Zn$.

For $j\in\Zn$, we say that position $j$ {\it gives a carry} when $C_j=1$, and we say that position $j$ {\it receives a carry} when $C_{j-4}=1$.
\end{step}
\begin{step}[Motifs]\label{Celeste}
For each $j \in \Zn$, we have $C_j \in \{0,1\}$, so relation \eqref{Beatrice} shows that there are only four possibilities for the sum $A_j=2 X_j + X_{j-1} + Z_j$: (i) $A_j=1$ and position $j$ receives but does not give a carry, (ii) $A_j=2$ and position $j$ neither receives nor gives a carry, (iii) $A_j=4$ and position $j$ both receives and gives a carry, (iv) $A_j=5$ and position $j$ does not receive but does give a carry.

These four possibilities can be realized only nine different ways at a given position $j$, and we call these possibilities the nine {\it motifs}, and list them on \reftab{Gabriel} below.
Note that some ways of achieving a sum $A_j=4$ or $5$ are omitted, as they would require the conditions forbidden by Surgery I or II (see \reftab{Michael}).
The name of each motif begins with a number that indicates the value of $A_j=2 X_j+X_{j-1}+Z_j$ when that motif occupies position $j$.
\begin{table}[ht]\caption{The Motifs}\label{Gabriel}
\begin{center}
\begin{tabular}{c||c|c||c||c|c}
Motif at Position $j$ & $X_{j-1}$ & $X_j$ & $Z_j$ & $C_{j-4}$ & $C_j$ \\ 
\hline
\hline
1A & 0 & 0 & 1 & 1 & 0 \\
1B & 1 & 0 & 0 & 1 & 0\\
\hline
2A & 0 & 0 & 2 & 0 & 0\\
2B & 0 & 1 & 0 & 0 & 0\\
2C & 1 & 0 & 1 & 0 & 0\\
2D & 2 & 0 & 0 & 0 & 0\\
\hline
4A & 0 & 2 & 0 & 1 & 1\\
4B & 2 & 1 & 0 & 1 & 1\\
\hline
5A & 1 & 2 & 0 & 0 & 1\\
\end{tabular}
\end{center}
\end{table}
\end{step}
\begin{step}[Rules of Succession]
The presence of a given motif at position $j$ requires particular values of both $X_{j-1}$ and $X_j$.
This constrains which motifs can precede or follow each other.
For example, motif 1B at position $j$ requires $X_{j-1}=1$ and $X_j=0$, which means that the motif at position $j-1$ can only be 2B or 4B, and the motif at position $j+1$ can only be 1A, 2A, 2B, or 4A.
We construct a directed graph that has the nine motifs as its vertices, and there is a directed edge from motif $M$ to motif $N$ if and only if the compatibility condition for $X_j$ allows motif $N$ to occupy position $j+1$ when motif $M$ occupies position $j$.

We depict the directed graph in \reffig{Terrence} below, but omit certain edges to avoid clutter.
In particular, the motifs from \reftab{Gabriel} that have $X_{j-1}=0$ are called {\it starting motifs} and are marked with a $*$, while the motifs from \reftab{Gabriel} with $X_j=0$ are called {\it ending motifs} and are marked with $\dag$, and to fill in the edges that we neglected to draw, one would draw a directed edge from every ending motif to every starting motif.  (Note that these are the only directed edges that emanate from ending motifs, and the only directed edges that enter starting motifs.)
We have drawn the edge from 4B to 5A with a dotted line: the reason for this will become apparent in \refstep{Felicity}.
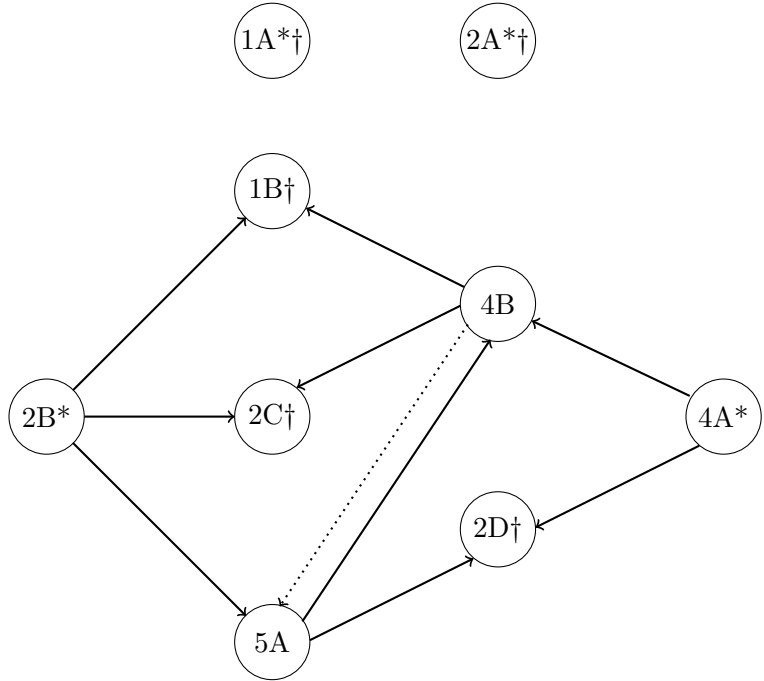
\begin{figure}[ht]\caption{Directed Graph Illustrating Succession of Motifs}\label{Terrence}
\begin{center}
\begin{tikzpicture}
\node (1A) at (0,-1) {1A*\dag};
\node (1B) at (0,-3) {1B\dag};
\node (2C) at (0,-6) {2C\dag};
\node (5A) at (0,-9) {5A};
\node (2A) at (3,-1) {2A*\dag};
\node (4B) at (3,-4.5) {4B};
\node (2D) at (3,-7.5) {2D\dag};
\node (2B) at (-3,-6) {2B*};
\node (4A) at (6,-6) {4A*};
\draw (1A) circle(0.5);
\draw (1B) circle(0.5);
\draw (2A) circle(0.5);
\draw (2B) circle(0.5);
\draw (2C) circle(0.5);
\draw (2D) circle(0.5);
\draw (4A) circle(0.5);
\draw (4B) circle(0.5);
\draw (5A) circle(0.5);
\draw [->,thick] (-2.65,-5.65)--(-0.35,-3.35); 
\draw [->,thick] (-2.5,-6)--(-0.5,-6); 
\draw [->,thick] (-2.65,-6.35)--(-0.35,-8.65);
\draw [->,thick] (0.5,-8.975)--(2.675,-7.8875);
\draw [->,thick] (2.55,-4.275)--(0.45,-3.225);
\draw [->,thick] (2.5,-4.525)--(0.325,-5.6125);
\draw [->,thick,dotted] (2.6,-4.775)--(0.1,-8.525);
\draw [->,thick] (0.4,-8.725)--(2.9,-4.975);
\draw [->,thick] (5.55,-5.725)--(3.45,-4.725);
\draw [->,thick] (5.675,-6.3875)-- (3.5,-7.475);
\end{tikzpicture}
\end{center}
\end{figure}
\end{step}
\begin{step}[Rule of Double Succession]\label{Bernard}
The conditions forbidden by Surgeries III and IV (see \reftab{Michael}) constrain which motif may follow two positions after another: a motif 4A or 5A at position $j$ cannot be succeeded by a motif 1A, 2A, 2C, 4A, or 5A at position $j+2$.
\end{step}
\begin{step}[The Forbidden Edge]\label{Felicity}
In \reffig{Terrence}, we have drawn the edge from motif 4B to motif 5A with a dotted line to indicate that we are actually forbidden to traverse it.
For we must have either motif 4A or 5A immediately before any occurrence of motif 4B, and if we proceed thence to motif 5A via the dotted edge, this will cause the motif 4A or 5A to be two positions before the motif 5A, contradicting the Rule of Double Succession in \refstep{Bernard}.
\end{step}
\begin{step}[The Ten Sequences]
Let us examine our directed graph after discarding the forbidden edge.
We define a {\it sequence of motifs}, or just a {\it sequence} to be a finite succession of motifs such that the first motif is a starting motif, the last motif is an ending motif, and all the other motifs are neither starting nor ending motifs.
(Note that a sequence can have a single motif, so long as that motif is both starting and ending.)
If we start at any vertex and follow directed edges, then no matter how we do it, we arrive at an ending motif in zero to three steps.
Similarly, if we start at any vertex and follow directed edges backwards, we arrive at a starting motif in zero to three steps.
Furthermore,  the only motif that can follow an ending motif is a starting motif, and the only motif that can precede a starting motif is an ending motif: these transitions correspond to the edges we did not draw explicitly in \reffig{Terrence}.
Thus our cyclic sequence of the $n$ motifs, for the $n$ positions $j \in \Zn$ is really a cyclic concatenation of sequences.

One can easily find all the possible sequences from \reffig{Terrence} by beginning at starting vertices (marked with $*$), following directed edges (but not the dotted forbidden edge), and arriving at ending vertices (marked with $\dag$).
We list all sequences on \reftab{Daphne} below.
\begin{table}[ht]\caption{The Sequences}\label{Daphne}
\begin{center}
\begin{tabular}{c|c}
Sequence Name & Constituent Motifs \\
\hline
S1 & 1A \\
S2 & 2A \\
S3 & 2B--1B \\
S4 & 2B--2C \\
S5 & 2B--5A--2D \\
S6 & 2B--5A--4B--1B \\
S7 & 2B--5A--4B--2C \\
S8 & 4A--2D \\
S9 & 4A--4B--1B \\
S10 & 4A--4B--2C
\end{tabular}
\end{center}
\end{table}

As mentioned before, the $n$ motifs for the $n$ positions $j\in\Zn$ make up a cyclic concatenation of sequences.
When we speak of one sequence {\it preceding} or {\it following} another, we are always proceeding cyclically around the $n$ positions.
If a single sequence were to account for all $n$ positions, it would therefore both precede and follow itself.
\end{step}
\begin{step}[Rule of Quadruple Succession]\label{Geoffrey}
If motif $M$ occupies position $j$ and motif $N$ occupies position $j+4$, any carry given by motif $M$ will be received by motif $N$.
As mentioned in \refstep{Celeste}, our motifs are named to help us keep track of carries: (i) motifs whose names begin in 4 or 5 give carries, while those whose names begin with 1 or 2 do not, and (ii) motifs whose names begin with 1 or 4 receive carries, while those whose names begin with 2 or 5 do not.
This leads to the following Rule of Quadruple Succession, summarized on \reftab{Alexander}.
\begin{table}[ht]\caption{Rule of Quadruple Succession}\label{Alexander}
\begin{center}
\begin{tabular}{c|c}
Four Places after a Motif & Must be a Motif \\ 
whose Name begins with  & whose Name begins with \\
\hline
1 or 2 & 2 or 5 \\
4 or 5 & 1 or 4
\end{tabular}
\end{center}
\end{table}
\end{step}
\begin{step}[Special Rule of Four]\label{Francois}
Suppose that we have the following succession of motifs: M--2D--2B--2C, where M is either 4A or 5A.
If M occupies position $j$, then we would have $X_j=2$ (from M), $X_{j+2}= 1$ (from 2B), and $Z_{j+3}=1$ (from 2C).
Surgery V (see \reftab{Michael}) forbids this, so such a succession of four motifs is prohibited.
This, in particular, prohibits us from having the following concatenations of sequences: S8--S4 or S5--S4.
\end{step}
\begin{step}[Elimination of S7, S10]
The sequences S7 and S10 violate the Rule of Double Succession (see \refstep{Bernard}), so they do not actually occur.
\end{step}
\begin{step}[Elimination of S6]
If S6 occurs, the Rule of Quadruple Succession (see \refstep{Geoffrey}) demands that it be preceded by a sequence whose last motif begins with a 4 or 5, but there is no such thing.
So S6 does not actually occur.
\end{step}
\begin{step}[Elimination of S5]
S5 cannot be followed by S1, S2, S8, or S9 by the Rule of Double Succession, nor by S3 or S5 by the Rule of Quadruple Succession, nor by S4 by the Special Rule of Four (\refstep{Francois}).
As S6, S7, and S10 have been eliminated, S5 can have no successor, so it never occurs.
\end{step}
\begin{step}[Elimination of S8]
S8 cannot be followed by S1, S2, S8, or S9 by the Rule of Double Succession, nor by S4 by the Special Rule of Four (\refstep{Francois}).  
If S8 were followed by S3, then the S8--S3 would be required by the Rule of Quadruple Succession to be preceded by a sequence whose last motif begins with a 4 or 5, but no such thing exists.
As S5, S6, S7, and S10 have been eliminated, S8 can have no successor, so it never occurs.
\end{step}
\begin{step}[Elimination of S9]
S9 cannot be preceded by S3 or S4 by the Rule of Quadruple Succession.
Nor can S9 be preceded by an S1 or S2, because then we would need the S1--S9 or S2--S9 to be preceded by a sequence whose last motif begins with a 4 or 5, but no such thing exists.
As S5, S6, S7, S8, and S10 have already been eliminated, S9 must always be preceded by S9.
But we cannot have all $n$ positions accounted for by S9 sequences, for then all positions would receive a carry, but not all of them would give a carry.
Thus S9 cannot actually occur.
\end{step}
\begin{step}[Elimination of S1, S3]
We have eliminated all sequences except S1, S2, S3, and S4.
Note that none of these sequences contains a motif that gives a carry.
So we cannot employ any motifs that receive a carry, and since S1 and S3 contain such motifs, they are eliminated.
\end{step}
\begin{step}[Conclusion]
Our $n$ positions are taken up entirely with sequences S2 and S4.
Since each S2 occupies one position, while each S4 occupies two positions, say that there are $k \leq (n-1)/2$ instances of S4 and $n-2 k$ instances of S2.
This, in turn, means that we have $n-2 k$ instances of motif 2A, and $k$ instances each of motifs 2B and 2C.

Recall that $X_j=x_{r j}$ and $Z_j=z_{r j}$ for every $j\in\Zn$.
Since $X_j=0$, $1$, or $0$, respectively, when motif 2A, 2B, or 2C occupies position $j$, we have $\wt(x)=\sum_{i \in \Zn} x_i = \sum_{j \in \Zn} X_j= k$.
Since $Z_j=2$, $0$, or $1$, respectively, when motif 2A, 2B, or 2C occupies position $j$, we have $\wt(z)=\sum_{i \in \Zn} z_i = \sum_{j \in \Zn} Z_j= 2(n-2 k)+k=2 n-3 k$.
So $\wt(x)+\wt(z)=2 n-2 k$, and since $k\leq (n-1)/2$, this means that $\wt(x)+\wt(z) \geq n+1$, which is \eqref{Yvette}, i.e., what we wanted to show.
\end{step}
\section*{Acknowledgements}
The first author was supported in part by a Research, Scholarship, and Creative Activity Award from California State University, Northridge.
The first author was also supported in part by the Institut de Math\'ematiques de Toulon at Universit\'e de Toulon as a visiting professor.
The authors thank an anonymous referee for helpful corrections to the manuscript.

\end{document}